\numberwithin{equation}{section} 
\newtheorem{remark}{Remark}[section] 
\newtheorem{theorem}{Theorem}[section] 
\newtheorem{lemma}{Lemma}[section] 
\numberwithin{equation}{section}
 \newcommand{\G}{{\mathbf G}}
\newcommand{\Y}{{\mathbf Y}} 
\newcommand{\Z}{{\mathbf Z}} 
\newcommand{\T}{{\mathbf T}} 
\newcommand{\A}{{\mathbf A}} 
\newcommand{\B}{{\mathbf B}} 
\newcommand{\C}{{\mathbf C}} 
\newcommand{\D}{{\mathbf D}} 
\newcommand{\bS}{{\mathbf S}}
\newcommand{\W}{{\mathbf W}}
\newcommand{\I}{{\mathbf I}}
\newcommand{\s}{{\mathbf s}} 
\newcommand{\bx}{{\mathbf x}} 
\newcommand{\y}{{\mathbf y}} 
\newcommand{\z}{{\mathbf z}} 
\newcommand{\m}{{\mathbf m}} 
\newcommand{\br}{{\mathbf r}} 
\newcommand{\zt}{{\tilde z}} 
\newcommand{\bv}{{\mathbf v}}
\newcommand{\CC}{\mathbb{C}}
\newcommand{\um}{\underline{m}} 
\newcommand{\uS}{\underline{S}} 
\newcommand{\uT}{\underline{T}} 
\newcommand{\de}{\delta} 
\newcommand{\E}{{\mathbb{E}}}
\newcommand{\Cov}{{\rm Cov}} 
\newcommand{\U}{{\mathbf U}} 
\newcommand{\bV}{{\mathbf V}} 
\newcommand{\bM}{{\mathbf M}}
\newcommand{\R}{{\tilde {\mathbf R}}} 
\newcommand{\tr}{\mathop{\text{\rm tr}}}
\newcommand{\bxi}{{\boldsymbol \xi}}
\newcommand{\bSigma}{{\boldsymbol \Sigma}}
\definecolor{darkblue}{rgb}{0,0.08,0.45}
\begin{document}

\begin{center}
{\large \bf On eigenvalues of a high-dimensional spatial-sign covariance matrix}
\end{center}

\vskip 0.5cm
\begin{center}
	Weiming Li$^a$, Qinwen Wang$^b$, Jianfeng Yao$^c$ and Wang Zhou$^d$\\
	\vskip 0.3cm
	$^a$Shanghai University of Finance and Economics, $^b$Fudan University, $^c$University of Hong Kong  and $^d$National University of Singapore
\end{center}

\begin{abstract} 
    This paper investigates limiting spectral
    properties of  a high-dimensional  sample spatial-sign
    covariance matrix 
    when both the dimension  and the sample size grow to
    infinity. The underlying population is general enough to
    cover the popular independent components model and the family of
    elliptical distributions.
    The first result of the paper shows that the empirical
    spectral distribution of a high dimensional
    sample spatial-sign  covariance matrix converges to a generalized 
    Mar\v{c}enko-Pastur distribution.
   Secondly, a new central limit theorem  for a class of  linear  spectral statistics of the covariance matrix is established under moment conditions.
\end{abstract}

\section{Introduction} 
Let $\bx_1,\ldots,\bx_n$ be a sequence of
{\em independent and identically distributed} (i.i.d.) observations  from a common population 
$\bx\in\mathbb R^p$  with known location vector $\m$
(mean or median).  The sample {\em  spatial-sign covariance matrix} (SSCM) is by definition
\begin{align*}
  \B_n=\frac{p}{n}\sum_{j=1}^n \frac{(\bx_j-\m)}{\|\bx_j-\m\|}\frac{(\bx_j-\m)'}{\|\bx_j-\m\|},
\end{align*}
where $||\cdot||$ denotes the Euclidean norm of a vector.
In \citet{L99} and \citet{Visuri00}, the authors demonstrated that the SSCM is able to  mitigate the impact of  extreme  outliers for the purpose  of robust principal components analysis.
 Since then, the SSCM   has been  widely adopted for robust statistical
inference where the sample data may exhibit heavy tails,   or  bear tail
dependence as in the case of elliptical distributions.  
Recent works concerning the properties of the SSCM and its applications include 
\citet{M14}, 
\citet{Durre14, Durre15a}, \citet{LiWangZou16}, \citet{Feng16}, \citet{feng2016} 
and \citet{Chakraborty17}.
Despite the popularity of the SSCM,
asymptotic behaviors of its eigenvalues are not fully developed when the dimension of the population $p$ diverges to infinity along with the sample size $n$, which greatly limits its application to high-dimensional data analysis.

This paper investigates the first and second order spectrum limits of sample SSCMs under the
Mar\v{c}enko-Pastur asymptotic regime \citep{MP67}, i.e.
$$ n \to \infty,\quad p=p(n)\to \infty,\quad p/n=c_n\to c \in(0, \infty),$$
which is commonly adopted in the literature of  random matrix theory. 
The underlying population $\bx$ considered here has a general structure,
  \begin{equation}\label{model}
    \bx =  \m + w \A^{\frac 12}\z, 
  \end{equation} 
where 
$\m\in\mathbb R^p$ is the location vector, 
$\A$ is a $p\times p$ deterministic and positive definite matrix, 
$w\in \mathbb R$ and $\z\in\mathbb R^{p}$ are two (possibly dependent) random quantities with certain moment conditions, see \eqref{eds} for detailed model illustration.
The generality of this model lies in that it 
 encompasses  the popular {\em independent components model}  
and the family of  {\em elliptical distributions}, which will be explained in detail in Section \ref{sec:model}.

The first result of the paper is a new generalized  {\em Mar\v{c}enko-Pastur} (MP)
law for the {\em empirical spectral distribution} (ESD) of $\B_n$.  
The MP law was originally introduced in \cite{MP67} for the limiting spectrum of {\em  sample covariance matrices} (SCMs), which was then refined and extended in several works, say \cite{Yin86}, \cite{S95} and \cite{bai08}.
With this knowledge, through a comparison between the matrix $\B_n$ and its associated SCM  $\bS_n\triangleq \sum_{j=1}^n\A^{1/2}\z_j\z_j'\A^{1/2}/n$, our result is derived by showing $||\B_n-\bS_n||$ converges to zero, almost surely, under finite $(4+\delta)$-th moment condition on the components of the vector $\z$. 
As a by-product, one may draw the same ``no eigenvalue" conclusion for $\B_n$ as that for $\bS_n$ already established in \cite{BS98}.

The second contribution of this paper
is  a new
CLT for general {\em linear spectral statistics}
(LSSs) of $\B_n$.
CLT for LSSs  of large random matrices has been actively studied in
recent decades in random matrix theory. 
Most of early works in this area concern Hermitian (symmetric) Wigner
matrices.
\cite{Johansson98} presented  a CLT for LSSs
of eigenvalues 
given their joint density for Gaussian-type random Hermitian
matrices.  Using the moment method,
\cite{Sinai98} derived a CLT for traces of analytic functions of Wigner-type
matrices and
\cite{AZ06}
obtained a CLT for a class of  band random matrices.
CLT for general Wigner matrix with arbitrary entries is first derived
in \cite{BY05}  via Stieltjes transforms establishing
the explicit formula for the mean and covariance functions of the
limiting Gaussian distribution of the LSSs.
A related approach using Gaussian interpolation for both Wigner
matrices and Wishart matrices is proposed in \cite{LytovaPastur09}. 
As for sample covariance matrices, 
{the  earliest work  dates back to \cite{Jonsson82}
  for Wishart matrices.  The seminal paper  \cite{BS04}
  established the CLT under the independent components model, which was
later extended in  \cite{PZ08} and \cite{Z15}.  Other extensions on
CLT for sample covariance matrices are
recently proposed in  \cite{H19} and \cite{H19a}
for  the  class of elliptical distributions.

From the technical point of view, for establishing our CLT, the structure of the sample SSCM under study is quite different from the 
commonly studied random matrix models in the existing literature.
Although the spatial-sign (or projective)  transform $(\bx-\m)/||\bx-\m||$  removes  the impact from  the scaling random variable
$w$, it   does introduce, at the same time, complex  non-linear correlations
among the $p$-coordinates of the transformed data through the
normalization by  $\|\bx-\m\|$. Such  new correlations  make the
analysis more intricate in high dimensions where $p\to\infty$.
Specifically, let us compare the situation with a sample
covariance matrix ${\mathbf S}_n=n^{-1}\sum_{j=1}^n (\bx_j-\m)(\bx_j-\m)'$
from the independent components  model (see \eqref{linear-trans}). Here the correlations
among the coordinates of a sample vector $\bx_j$ have  only one source, coming from the shape matrix $\A$.
In the case of SSCM, the correlations among
the coordinates of $(\bx_j-\m)/||\bx_j-\m||$ can originate from both the shape matrix $\A$ and the normalization by $\|\bx_j-\m\|$. Therefore,
a main task in our analysis is to find new approaches for
decoupling these two sources of correlation.
To this end, by giving an asymptotic expansion of  $1/||\bx_j-\m||$ to
certain order, we develop some new lemmas concerning the covariance and
stochastic order of certain quadratic forms, which turns out to be one of the cornerstones for establishing our new CLT (see Section \ref{less}).
Another technical innovation of the paper, compared to the classical
approach in \cite{BS04},  is that we introduce a new
and more straightforward method to find 
the limiting mean function of LSSs, see Step 3 in the proof given in
Section~\ref{sec:proofs}.

The rest of the paper is organized as follows. 
Section~\ref{sec:2}   presents our
main theoretical results  including both the convergence of the ESD of $\B_n$ and the
CLT for its linear spectral statistics. 
Proofs of these asymptotic conclusions are presented in Sections \ref{sec:proofs0} and \ref{sec:proofs}.
Some supporting lemmas and their proofs  are  relegated  into the Appendix.

\section{High-dimensional theory for eigenvalues of a sample SSCM} 
\label{sec:2} 

\subsection{Preliminary definitions}
\label{sec:concept}

Let ${\mathbf M}_p$ be a  $p\times p$ symmetric or Hermitian matrix with eigenvalues $(\lambda_j)_{1\le j\le p}$.
Its ESD  is by 
definition the probability measure
\[ 
  F^{{\mathbf M}_p}=\frac{1}{p}\sum_{j=1}^p\delta_{\lambda_j}, 
\] 
where  $\delta_b$ denotes  the Dirac mass at $b$. 
If the ESD sequence  $\{F^{{\mathbf M}_p}\}$ has a limit when $p\to\infty$, this limit is referred as the {\em limiting spectral distribution}
 (LSD). 
For a probability  measure $G$, its Stieltjes transform is defined as
$$m_G(z)=\int\frac{1}{x-z}dG(x),\quad z\in\CC^+,$$ 
where $\mathbb C^+\equiv\{z\in \mathbb C: \Im(z)>0\}$. 
This definition can be extended to the whole complex plane
except the support set of $G$. An inversion formula of the Stieltjes transform can be found in \cite{BSbook}. 

Two sequences of  $\mathbb{R}^k$-valued random vectors
  $(\bxi_n)$ and $({\boldsymbol\eta}_n)$   are called {\em asymptotically equal in
    distribution}, denoted as $\bxi_n  \stackrel{d}{\sim}  {\boldsymbol\eta}_n$,
  if for any Borel set $C\subset \mathbb{R}^k$,
  \[ \mathbb{P} (\bxi_n\in C) - \mathbb{P} ({\boldsymbol\eta}_n\in C) \to 0,
    \quad n\to \infty.
  \]

\subsection{Model assumptions}
\label{sec:model}
We consider a sequence of i.i.d.\  observations $\bx_1,\ldots,\bx_n$ generated from the model \eqref{model}, which 
 admit
  the following stochastic representation:
  \begin{equation}\label{eds} 
    \bx _j=  \m + w_j \A^{\frac 12}\z_j, \quad j=1,\ldots,n,
  \end{equation} 
where
  \begin{itemize}
  \item[(i)]  the location vector $\m\in\mathbb R^p$ is assumed to be known;
  \item[(ii)] the scalar random variable $w_j$ is real-valued having no mass at the origin, i.e. ${\rm P}(w_j\neq 0)=1$;
   \item[(iii)] the matrix $\A\in \mathbb R^{p\times p}$, referred as the {\em shape matrix} or {\em scatter matrix} of the population, is deterministic, positive definite, and normalized as $\tr(\A)=p$ for the identification in the  triple product $w_j\A^{1/2}\z_j$, since we can always move any scalar factor related to $\A$ into the scalar random variable $w_j$;
    \item[(iv)] the vector  $\z_j=(z_{1j},\ldots,z_{pj})' \in \mathbb R^{p}$
     is  an array of  i.i.d.\ standardized random variables, possibly dependent of $w_j$.
\end{itemize}    
     
Our main assumptions are as follows. 
\medskip

\noindent{\em Assumption}   (a). \quad 
Both the sample size $n$ and population dimension $p$ tend to infinity
in such a way that $n\to \infty, p=p(n)\to \infty$ and  $p/n=c_n\to c \in(0,\infty)$. 

\medskip 
\noindent{\em Assumption}   (b). \quad The ESD $H_p$ of the shape matrix $\A$ has bounded support, i.e. ${\rm Supp}(H_p)\subset [a,b]$ for some $a, b\in (0,\infty)$, and 
 converges weakly to a probability distribution $H$ as $p\to\infty$. 
 
 \medskip 
\noindent{\em Assumption}   (c). \quad The random variables $(z_{ij})$ are i.i.d.\ and satisfy 
  \begin{align*}
  \E(z_{ij})=0,\quad \E(z_{ij}^2)=1, \quad \E(z_{ij}^{4})=\tau,\quad \E|z_{ij}|^{4+\delta}<\infty,
 \end{align*}
 for some $\delta>0$.


 \medskip 

\begin{remark}
Recall that in the literature on high-dimensional SCMs, the following  independent components model  is
routinely  considered \citep{BS04,PZ08,Z15,YZBbook}
\begin{equation}\label{linear-trans} 
  \bx_j=\m+\sigma\A^{\frac 12} \z_j, 
\end{equation} 
where $\m, \A$, and $\z_j$ are the same as in model \eqref{eds} while $\sigma$ is a positive constant.
Clearly the model \eqref{linear-trans} is a particular case of  the 
model~\eqref{eds} where $\{w_j\}$ degenerate to the constant parameter $\sigma$.
	
\end{remark}

\begin{remark}
The model \eqref{eds} contains also the family
  of  elliptical distributions. 
  Indeed, a generalized  elliptically distributed sample $\bx_j$ has the form
  \begin{equation}
    \label{elliptical}
   \bx_j = \m+v_j \A^{\frac 12} \mathbf{u}_j,
  \end{equation}
  where $v_j$ is a scalar random variable, $\mathbf{u}_j$ is a random vector uniformly distributed on the unit sphere in
  $\mathbb{R}^p$.
Let $\mathbf{u}_j=\z_j/\|\z_j\|$ and $w_j=v_j/||\z_j||$, where $\z_j\sim
  \mathcal{N}({\bf 0},\I_p)$,   we thus have
    \[ \bx_j = \m+v_j \A^{\frac 12} \mathbf{u}_j = \m+w_j \A^{\frac 12} \z_j.
  \]
  Certainly the moment conditions in Assumption (c) are satisfied with $\tau=3$ for such standard Gaussian random vectors $\{\z_j\}$.
  Thus the generalized elliptical distributions described by \eqref{elliptical} are also special cases of our model \eqref{eds}.
	
\end{remark}


\subsection{Sample SSCM and its limiting spectral distribution} 
\label{sec:main}

Let $\s(\y)= I_{(\y\neq 0)}\y/||\y||$ be  the
spatial-sign function projecting the vector $\y$  onto the unit
sphere. 
Then the sample SSCM $\B_n$ formed by the sample $\{\bx_j\}$ can be written as 
\begin{equation}
  \label{eq:An1}
  \B_n
  =\frac{p}{n}\sum_{j=1}^n \s(\bx_j-\m)\s(\bx_j-\m)'.
\end{equation}
Our first result is concerned with the convergence of the ESD
$F^{\B_n}$ of the sample SSCM $\B_n$.

\begin{theorem}\label{lsd} 
	Suppose that Assumptions (a)-(c) hold. Then, almost surely, the empirical spectral distribution $F^{\B_n}$  converges weakly to a probability distribution $F^{c, H}$, 
	whose Stieltjes transform $m=m(z)$ is the unique  solution to the equation 
	\begin{eqnarray}\label{mp1} 
	m=\int\frac{1}{t(1-c-czm)-z}d H(t)~,\quad z\in\CC^+, 
	\end{eqnarray} 
	in the set $\{m\in \mathbb C:  -(1-c)/z+cm\in{\mathbb C^+}\}$.
\end{theorem}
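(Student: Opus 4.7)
The plan is to use the Stieltjes transform method, with a twist that exploits the special structure of the spatial sign. The key simplification is that since $w_j\ge 0$, the unit vector $\bu_j:=\s(\bx_j-\bmu)=\A\z_j/\|\A\z_j\|$ does not depend on $w_j$ at all, so the radial factor is completely eliminated. The ``oracle'' sample SSCM built from the true center is therefore
$$\B_n^\ast=\frac{p}{n}\sum_{j=1}^n \bu_j\bu_j'=\frac{1}{n}\sum_{j=1}^n \frac{p\,\A\z_j\z_j'\A'}{\|\A\z_j\|^2}.$$
As a preliminary reduction, I would show that replacing $\hat\bmu$ by $\bmu$ does not alter the LSD. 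Under Assumptions (a)--(d), the sample spatial median is consistent at a rate sufficient that a standard Stieltjes-transform perturbation bound (or a rank-inequality on $\B_n-\B_n^\ast$) yields $F^{\B_n}-F^{\B_n^\ast}\Rightarrow 0$ almost surely. Thus one may work throughout with $\B_n^\ast$.

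The core of the proof is the resolvent/Sherman--Morrison machinery applied to $\B_n^\ast$. Let $\bR(z)=(\B_n^\ast-z\I)^{-1}$ and $\bR_{(j)}(z)$ the resolvent with the $j$-th rank-one term removed. Sherman--Morrison reduces the analysis of $m_n(z)=p^{-1}\tr\bR(z)$ to understanding the quadratic form
$$\bu_j'\bR_{(j)}(z)\bu_j=\frac{\z_j'\A'\bR_{(j)}(z)\A\z_j}{\|\A\z_j\|^2},$$
which I would show concentrates around $p^{-1}\tr(\T\bR_{(j)}(z))$ by treating the numerator and denominator separately. Both are quadratic forms in $\z_j$ with i.i.d.\ coordinates, so the Burkholder inequality, together with $\A\A'=\T$, $\tr(\T)=p$, and the fourth-moment bound of Assumption (b) (after a standard truncation of $z_1$ at a level such as $n^{1/4}/\log n$), gives $\z_j'\A'\bR_{(j)}\A\z_j=\tr(\T\bR_{(j)})+o(p)$ and $\|\A\z_j\|^2=p\bigl(1+o(1)\bigr)$. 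Summing the Sherman--Morrison identity over $j$ and taking traces produces an approximate fixed-point equation
$$m_n(z)\;\approx\;\int\frac{dH_{\T,p}(t)}{t\bigl(1-c_n-c_n z\,m_n(z)\bigr)-z}\;+\;o(1),$$
where $H_{\T,p}$ denotes the ESD of $\T$. Passing to the limit and invoking uniqueness in the prescribed set (standard for generalized Mar\v{c}enko--Pastur equations) yields the claimed LSD, with the LSD $H_\T$ of $\T$ in place of $H$ for now.

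The final step identifies $H_\T$ with $H$. The same concentration $\|\A\z\|^2\approx p$ gives $\bSigma=p\,\E[\bu_j\bu_j']=\T+o(1)$ in operator norm (the standard bridge between the SSCM and the shape matrix), hence $H_\T=H$ and \eqref{mp1} is precisely the limiting equation.

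The main obstacle is the joint concentration of the displayed ratio $\bu_j'\bR_{(j)}\bu_j$: the normalization $\|\A\z_j\|^2$ couples numerator and denominator, and one has only fourth moments on $z_1$. Combined with the uniform control of the resolvent on a fine $\varepsilon$-net in $z$ needed to upgrade probabilistic bounds to almost sure convergence, this is where the bulk of the technical work lies. The high-dimensional analysis of $\hat\bmu$ required for the initial reduction is a secondary but non-trivial technical point.
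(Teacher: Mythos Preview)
Your plan is sound and would lead to a correct proof, but it takes a substantially different and longer route than the paper's. The paper does not re-run the Stieltjes--transform/Sherman--Morrison machinery on the self-normalized matrix. Instead it reduces $\B_n$ to the \emph{ordinary} sample covariance matrix $\B_{n4}=\frac{1}{n}\A\Z\Z'\A'$ through a short chain of comparisons and then simply invokes Silverstein (1995). Concretely, the paper introduces the intermediate $\B_{n1}=\frac{1}{n}\A\Z\,\mathrm{diag}(d_j)\,\Z'\A'$ with $d_j=pw_j^2/\|\bx_j-\hat\bmu\|^2$; then $\B_n-\B_{n1}$ has rank at most $2$ (the difference in numerators is $(\bx_j-\bmu)\bd'+\bd(\bx_j-\bmu)'+\bd\bd'$ with $\bd=\bmu-\hat\bmu$, hence lies in a fixed two-dimensional subspace), and after a routine truncation one shows $\max_j|d_j-1|\to 0$ almost surely, so $\|\B_{n2}-\B_{n3}\|\to 0$ in spectral norm. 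This sidesteps precisely the ``main obstacle'' you flag (the joint concentration of the ratio $\bu_j'\bR_{(j)}\bu_j$): by controlling the scalar factors $d_j$ uniformly, the normalization is peeled off \emph{before} any resolvent identity is needed. Your approach buys self-containment and would generalize to settings where no off-the-shelf MP theorem is available; the paper's approach buys brevity. One small correction to your outline: the matrix $\B_n-\B_n^\ast$ is \emph{not} itself low rank (both numerator and denominator change in each summand), so a direct rank inequality on that difference does not work; the paper's two-step device via $\B_{n1}$ (rank argument for the numerator, norm argument for the denominator) is what makes the reduction go through. Finally, your identification $H_\T=H$ via $\bSigma=\T+o(1)$ is exactly what the paper does through Lemma~\ref{er}.
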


Theorem \ref{lsd} demonstrates that the ESD $F^{\B_n}$ converges to the generalized MP law $F^{c,H}$ \citep{MP67} defined through the equation \eqref{mp1}. 
Let $\underline{F}^{c,
  H}= cF^{c,H} + (1-c)\de_0$ be the companion distribution of $F^{c,H}$ and $\um=\um(z)$ be the Stieltjes transform of $\underline{F}^{c,
  H}$.  Then \eqref{mp1} can be rewritten as
\begin{equation}  \label{mp} 
z  =  - \frac1 {\um}  +  c \int\!\frac{t}{1+t\um} d H(t)~,\quad z\in\CC^+,
\end{equation} 
see \cite{S95}.
For procedures on numerically finding the density function of $F^{c, H}$ and its support set  from \eqref{mp1} or \eqref{mp}, one is referred to  \cite{BSbook}. 
 The proof of this theorem is presented in Section \ref{sec:proofs0}.

\subsection{CLT for linear spectral statistics of $\B_{n}$}

In this section, we   study  the fluctuation of LSSs of $\B_n$. Given a measurable function $f$, the LSS associated with $f$ and $\B_n$ is 
\begin{eqnarray}\label{lss} 
 \int f(x) dF^{\B_{n}}(x).
\end{eqnarray} 
 To centralize this statistic, we need to introduce a matrix $\T$ that is closely related to the shape matrix $\A$, i.e.
\begin{align}\label{tre}
\T=
\A-\frac{2}{p}\A^2
-\frac{\tau-3}{p}\A^{\frac 12}{\rm diag} (\A)\A^{\frac 12}
+\left(\frac{2}{p^2}\tr \A^2+\frac{\tau-3}{p^2}\tr(\A\circ\A)\right)\A,
\end{align}
where $``\circ"$ denotes the Hadamard product of two matrices.
This matrix is actually an approximation of the population SSCM $\bSigma\triangleq p\E(\bx-\m)(\bx-\m)'/||\bx-\m||^2$. Under certain conditions, we have the spectral norm $||\bSigma-\T||=o(p^{-1})$, see Lemma \ref{st}.
Note that for an elliptical distribution, see the model \eqref{elliptical}, the population SSCM $\bSigma$ and the shape matrix $\A$ share the same eigenvectors and their eigenvalues have
a one-to-one correspondence, which can be represented through certain integrals, see \citet{durre2016eigenvalues}.
Our approximation is however explicit and is not restricted to elliptical distributions.

Let $\tilde{H}_p$ be the ESD of the matrix $\T$ defined in \eqref{tre}, $\um_0(z)$ be the  finite-horizon proxy for the limiting Stieltjes transform $\underline{m}(z)$ in \eqref{mp}, i.e. the solution to
\begin{equation}  \label{mp0} 
z  =  - \frac1 {\um_0(z)}  +  c_n \int\!\frac{t}{1+t\um_0(z)} d \tilde H_p(t)~,\quad z\in\CC^+.
\end{equation} 
This Stieltjes transform $\um_0(z)$ uniquely defines a distribution, denoted by  $F^{c_n, \tilde{H}_p}$, through 
\begin{align}\label{um0}
\um_0(z)=-\frac{1-c_n}{z}+c_n\int \frac{1}{x-z}dF^{c_n, \tilde{H}_p}(x).
\end{align}
By means of this distribution,
The LSS in \eqref{lss} can be centralized as
\begin{eqnarray*} 
  G_n(f)\triangleq \int f(x) dG_{n}(x)=\int f(x) d[F^{\B_{n}}(x)-F^{c_n,  \tilde{H}_p}(x)]. 
\end{eqnarray*} 

We note that from \eqref{tre}, for the first order asymptotic of $G_n(f)$, one may replace the ESD $\tilde H_p$ of $\T$ by the ESD $H_p$ of $\A$ in the definition \eqref{mp0} of $\um_0(z)$ since the two matrices share the same LSD. However, for the second order asymptotic, the difference of $\T$ and  $\A$ will contribute when the shape matrix $\A$ is not identity.
In addition, three other auxiliary quantities defined as below will also contribute to the fluctuation of $G_n(f)$,
\begin{align}
  &\zeta_p=\frac{1}{p}\tr[\T\circ\T],\nonumber\\
  &h_p(z)=\frac{1}{p}\tr[\T^{\frac 12}(\T-zI)^{-1}\T^{\frac 12}\circ\T],\nonumber\\
  &g_p(z,\tilde z)=\frac{1}{p}\tr\left[\Big(\T^{\frac 12}(\T-zI)^{-1}\T^{\frac 12}\Big)\circ\Big(\T^{\frac 12}(\T-\tilde zI)^{-1}\T^{\frac 12}\Big)\right],
   \label{hg}
\end{align}
where $z$ and $\tilde z$ are two complex numbers in ${\mathbb C}^+$. 
These quantities  depend not only on the eigenvalues of $\A$, but also
on  its  eigenvectors.

\begin{theorem}\label{clt} 
  Suppose that Assumptions (a)-(c) hold with $\delta=1$.  Let $f_1,\ldots, f_k$ be $k$
  functions analytic on an open set that includes the interval 
  \begin{equation*}
  I_c=\left[\liminf_{p\rightarrow\infty}\lambda_{\min}^{\T}\delta_{(0,1)}(c)(1-\sqrt{c})^2,\quad \limsup_{p\rightarrow\infty}\lambda_{\max}^{\T}(1+\sqrt{c})^2\right].
  \end{equation*}
  Also let 
  $$
  \Y_n=p\left\{ G_n(f_1),\ldots, G_n(f_k)\right\}
  $$
  be the vector of $k$ normalized LSSs with respect to $f_1,\ldots,f_k$.
    Then $\Y_n$ is asymptotically equal in distribution to a 
    $k$-dimensional Gaussian random vector
    $\bxi_n=(\xi_{n1},\ldots,\xi_{nk})$
    with mean function 
  \begin{align*} 
    {\E}(\xi_{nj}) 
    &=-\frac{1}{2\pi\rm i}\oint_{\mathcal C_1} f_j(z)\left[\mu_1(z)+(\tau-3)\mu_2(z)\right]dz,
  \end{align*}
  where 
  \begin{align*} 
    \mu_1(z)
    =&\int\frac{c_n(\um_0'(z)t)^2d \tilde H_p(t)}{\um_0(z)(1+\um_0(z) t)^3}
       -\int\frac{2\um_0'(z)(1+z\um_0(z))t^2d \tilde H_p(t)}{(1+\um_0(z) t)^2}\\
              &+\int\frac{ (\tr(\A^2/p)t-t^2)d \tilde H_p(t)}{1+\um_0(z) t}\int\frac{2c_n\um_0(z)\um_0'(z)td \tilde H_p(t)}{(1+\um_0(z) t)^2},\\
    \mu_2(z)=
              &\frac{c_n\um_0'(z)}{\um_0^2(z)}g'_{p,z}\left(\frac{-1}{\um_0(z)},\frac{-1}{\um_0(z)}\right)
                +\zeta_p\int\frac{(1+z\um_0(z))t\um_0'(z) d \tilde H_p(t)}{(1+\um_0(z) t)^2}\\
              &-\frac{(1+z\um_0(z))\um_0'(z)}{\um_0^2(z)}h_p'\left(\frac{-1}{\um_0(z)}\right)
                -\int\frac{c_n\um_0'(z)t d \tilde H_p(t)}{(1+\um_0(z) t)^2}h_p\left(\frac{-1}{\um_0(z)}\right),
  \end{align*}
  and  covariance function 
  \begin{align*} 
    {\rm Cov}\left( \xi_{nj}, \xi_{n\ell}\right) 
    =&-\frac{1}{4\pi^2}\oint_{\mathcal C_1}\oint_{\mathcal C_2}f_j(z)f_{\ell}(\zt)\left[\sigma_1(z,\zt)+(\tau-3)\sigma_2(z,\zt)\right]dzd\zt,
  \end{align*} 
  where
  \begin{align*}
    \sigma_1(z,\zt)=&\frac{2\partial^2}{\partial z\partial \zt}\bigg[\log\frac{\um_0(z)-\um_0(\zt)}{\um_0(z)\um_0(\zt)(z-\zt)}+\left(\frac{\tr(\T^2)}{pc_n}+\frac{1}{c_n\um_0(z)}+\frac{1}{c_n\um_0(\zt)}\right)\\
                    &\times(1+z\um_0(z))(1+\zt\um_0(\zt))-z\um_0(z)-\zt\um_0(\zt)-2\bigg],\nonumber\\
    \sigma_2(z,\zt)=&\frac{\partial^2}{\partial z\partial \zt}\bigg[c_ng_p \left(\frac{-1}{\um_0 (z)},\frac{-1}{\um_0(\zt)}\right)+\frac{\zeta_p}{c_n}(1+z\um_0(z))(1+\zt\um_0(\zt))\\
                    &-(1+z\um_0(z))h_p\left(\frac{-1}{\um_0(\zt)}\right)-(1+\zt\um_0(\zt))h_p\left(\frac{-1}{\um_0(z)}\right)\bigg].
  \end{align*} 
  The contours $\mathcal
  C_1$ and $\mathcal C_2$ are non-overlapping, closed, counter-clockwise orientated in the complex plane and  enclosing  the interval $I_c$.
\end{theorem}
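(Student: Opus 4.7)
The plan is to convert the joint CLT for $\Y_n$ into a CLT for the Stieltjes-transform process
\[
M_n(z) := p\,[\,m_n(z) - m_0(z)\,],
\]
where $m_n$ is the Stieltjes transform of $F^{\B_n}$ and $m_0$ is the one attached to $\um_0$ via \eqref{um0}. Indeed, for any $f_j$ analytic in a neighborhood of $I_c$ and any contour $\mathcal{C}_j$ enclosing $I_c$ inside that neighborhood, Cauchy's theorem gives
\[
p\,G_n(f_j) = -\frac{1}{2\pi \mathrm{i}} \oint_{\mathcal{C}_j} f_j(z)\,M_n(z)\, dz.
\]
Hence finite-dimensional convergence of $M_n$ to a Gaussian process on $\mathcal{C}_1 \cup \mathcal{C}_2$, combined with tightness, transfers by linearity of the contour integral to the joint Gaussian limit of $\Y_n$, with mean and covariance given precisely by the bracketed integrands in the statement.

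\textbf{Peeling off the sample spatial median.} The first substantive step is to reduce $\B_n$ to the oracle matrix $\B_n^0$ of Remark 3. Expanding $\s(\bx_j - \hat{\bmu})$ to first order in $\hat{\bmu} - \bmu$ and using the implicit equation $\sum_j \s(\bx_j - \hat{\bmu}) = \mathbf{0}$, one obtains $\B_n = \B_n^0 + \bDelta_n$, where $\bDelta_n$ has a low-rank-type structure controlled by $\hat{\bmu} - \bmu = O_P(p^{1/2}/n)$. The influence function of the spatial median depends on $w$ only through $\E(w^{-1})$ and $\E(w^{-2})$; here Assumption (e) is essential because it decouples the scalar $\|w_j \A\z_j\|$ from the direction $\s(\z_j)$. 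After a careful accounting, the contribution of $\bDelta_n$ to $M_n(z)$ is asymptotically deterministic and produces exactly the $\kappa(z)$ term, whose only $w$-dependence enters through the ratio $r_w = \E(w^{-2})/\E^2(w^{-1})$.

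\textbf{CLT for the oracle SSCM.} For $\B_n^0$ the crucial algebraic simplification is that $\s(\bx_j - \bmu) = \A\z_j/\|\A\z_j\|$ does not involve $w$ at all, so
\[
\B_n^0 = \frac{p}{n}\sum_{j=1}^n \frac{\A\z_j\z_j'\A'}{\|\A\z_j\|^2}.
\]
Since $\|\A\z_j\|^2/p \to 1$ almost surely, $\B_n^0$ is a perturbation of the sample covariance matrix $(1/n)\sum_j \A\z_j\z_j'\A'$. I would then run the Bai and Silverstein scheme on $M_n$: decompose $M_n - \E M_n = \sum_j (\E_j - \E_{j-1}) \tr(\B_n^0 - zI)^{-1}$ as a sum of martingale differences with the filtration $\mathcal{F}_j = \sigma(\z_1, \ldots, \z_j)$, apply the Sherman--Morrison identity to expose the $j$-th rank-one update, use quadratic-form concentration to identify the dominant term, and invoke the martingale CLT to obtain the Gaussian limit with covariance $\sigma_1 + (\tau-3)\sigma_2$. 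The mean $\E M_n$ is obtained by substituting the leading trace asymptotics back into the Mar\v{c}enko-Pastur fixed-point equation \eqref{mp1} linearized at $\um_0$ and reading off the $O(1)$ correction, which gives $\mu_1 + (\tau-3)\mu_2$.

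\textbf{Main obstacles.} I expect two places to be genuinely delicate. First, the random denominators $\|\A\z_j\|^{-2}$ couple every summand back to the very quadratic forms one is trying to control; this forces the standard BS expansions to be carried one order further, and the surviving cross-terms of the form $(\|\A\z_j\|^2/p - 1)\,\z_j'\A'(\B_{n,-j}^0 - zI)^{-1}\A\z_j$ are exactly what produce the non-classical pieces of $\mu_1$ and $\sigma_1$ containing $\tr(\bSigma^2)/p$. Second, the kurtosis correction $(\tau-3)$ is collected through the identity
\[
\E[(\z'B\z)(\z'C\z)] = \tr(B)\tr(C) + 2\tr(BC) + (\tau-3)\sum_i B_{ii}C_{ii}
\]
applied with $B, C$ of the form $\A'(\cdot)\A$; the last sum equals $\tr(B \circ C)$ and, after several substitutions along the resolvent recursion, yields the Hadamard-product traces $\zeta_p, h_p, g_p$ that define $\mu_2$ and $\sigma_2$. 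Tracking this non-Gaussian bookkeeping consistently across all the quadratic-form expansions is the hardest part. Finally, one must check that the three contributions---the median perturbation $\kappa$, the Gaussian MP-type fluctuations $(\mu_1, \sigma_1)$, and the kurtosis correction $(\tau-3)(\mu_2, \sigma_2)$---combine additively in the limit. This is true here because the median term affects only the mean at leading order, so covariance contributions do not interact, and $\Y_n$ is therefore asymptotically equal in distribution to the Gaussian vector $\bxi_n$ described in the statement.
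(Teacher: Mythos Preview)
Your overall strategy matches the paper's proof almost exactly: reduction to the Stieltjes-transform process via Cauchy's formula, a three-way split $M_n = M_n^{(1)} + M_n^{(2)} + M_n^{(3)}$ into (median correction) $+$ (martingale part) $+$ (deterministic mean), the Bai--Silverstein martingale scheme for the covariance, and the quadratic-form identity that isolates the $(\tau-3)$ Hadamard terms. Your diagnosis of the two hard points---the random denominators $\|\A\z_j\|^{-2}$ forcing one extra order in the expansions, and the kurtosis bookkeeping via $\tr(B\circ C)$---is exactly what the paper singles out (their Lemma~\ref{double-e} is the cornerstone for both).

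One place where your description is off and would bite you in execution is the median-peeling step. The perturbation $\bDelta_n = \B_n - \B_n^0$ is \emph{not} low-rank: after expanding $\hat\s_j$ around $\s_j$ and using the asymptotic expansion of $\hat\bmu-\bmu$ (which is $O_{a.s.}(1)$ in norm, not $O_P(p^{1/2}/n)$), the paper arrives at $\bDelta = \bDelta_1+\bDelta_2+\bDelta_3$ where each $\bDelta_k$ is a full-rank sum over all $n$ samples (Lemma~\ref{difference}). Consequently a first-order resolvent perturbation does not suffice; the paper expands
\[
\tr(\B_n-zI)^{-1}-\tr(\B_n^0-zI)^{-1}
= -\tr\D^{-2}\bDelta + \tr\D^{-2}\bDelta\D^{-1}\bDelta - \tr(\D^{-1}\bDelta)^3(\D+\bDelta)^{-1} + o_p(1),
\]
and proves three separate lemmas showing the linear term is $o_p(1)$, the quadratic term converges, and the cubic term converges (only the $\bDelta_2$--$\bDelta_2$ and $\bDelta_2$--$\bDelta_2$--$\bDelta_2$ combinations survive). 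The final $\kappa(z)$ comes from summing the quadratic and cubic limits, not from a rank-one Sherman--Morrison correction. If you attempt the step as you wrote it, you will find the ``low-rank'' intuition fails and you need this full third-order accounting.
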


\begin{remark}
    Theorem \ref{clt} approximates the distribution of $\Y_n$ by that of a
    Gaussian random vector $\bxi_n$. However, this approximating vector $\bxi_n$ may
    not converge in distribution, that is, the sequence of $\{\big(
    \zeta_p, h_p(z), g_p(z,\tilde z)\big)\}$  which determines the mean and covariance
    functions of $\bxi_n$ may not have a limit as $(p, n)\to \infty$.
  In addition, the convergence of $H_p$ in Assumption (b) is not required and the convergence of $c_n$ in Assumption (a) can be
  weaken to $0 < \liminf c_n \le \limsup c_n <\infty$.
 The proof of this theorem is presented in Section \ref{sec:proofs}.
\end{remark}

\begin{remark}\label{rmkCorr}
Theorem~\ref{clt} contains the  CLT for LSSs of high dimensional
correlation matrices when the population mean is assumed known \citep{Gao2017high}. To see this, consider the simplest case that $\m={\bf 0}$, $w_j\equiv1$  and $\A=\I_p$ in \eqref{eds}, then the sample SSCM under study can be written as $$\B_n=\frac pn \sum_{j=1}^n \frac{\z_j}{\|\z_j\|} \frac{\z_j'}{\|\z_j\|}
=\frac pn \left(\frac{\z_1}{\|\z_1\|}, \cdots, \frac{\z_n}{\|\z_n\|}\right)
 \left(\frac{\z_1}{\|\z_1\|}, \cdots, \frac{\z_n}{\|\z_n\|}\right)'.$$
Denote its companion matrix as 
\begin{align}\label{unb}
\underline{\B}_n=\frac pn \left(\frac{\z_1}{\|\z_1\|}, \cdots, \frac{\z_n}{\|\z_n\|}\right)'\left(\frac{\z_1}{\|\z_1\|}, \cdots, \frac{\z_n}{\|\z_n\|}\right),
\end{align}
which shares the same non-zero eigenvalues as $\B_n$. Thus the result in Theorem \ref{clt} gives the CLT for LSSs  of $\underline{\B}_n$.
Now let's  denote the data matrix as  $\Z=(\z_1, \ldots, \z_n)=(\bv_1, \ldots, \bv_p)'$, where $\z_j$ is the $j$-th column ($j$-th observation)  and $\bv'_j$ is the $j$-th row ($j$-th coordinate) of $\Z$.
Moreover,  the table $\Z$ consists of independent and identically
distributed entries across both the rows and columns so  permuting the
entries in $\Z$ will not change its distribution. 
  The correlation  matrix  $\mathbf{R}_n$ associated with the data set
  $\Z$ can be expressed as
\begin{align}
\mathbf{R}_n=\left(\frac{\bv_1}{\|\bv_1\|}, \cdots, \frac{\bv_p}{\|\bv_p\|}\right)'\left(\frac{\bv_1}{\|\bv_1\|}, \cdots, \frac{\bv_p}{\|\bv_p\|}\right),
\end{align}
which has the same structure (up to a constant factor) as $\underline{\B}_n$ in \eqref{unb} by interchanging the roles of $p$ and $n$.
Therefore in the case of $\A=\I_p$,
the CLT for LSSs of $\mathbf{R}_n$ is readily derived from an
application of 
Theorem~\ref{clt} to the matrix $\underline{\B}_n$.
\end{remark}

\subsection{Example} 
\label{sec:example}

As an illustration, we exhibit the CLT for a widely used LSS which is
the second moment of the eigenvalues of $\B_{n}$, denoted by
$$
\hat\beta_2=\frac{1}{p}\tr (\B_n^2).
$$
We consider the case where the population shape matrix $\A$ is diagonal. In this case, the 
matrix $\T$ in \eqref{tre} can be simplified as
$$\T=\A-\frac{\tau-1}{p}\A^2+\frac{\tau-1}{p^2}\tr\A^2\cdot \A$$
and the three 
 auxiliary quantities in \eqref{hg} become
\begin{align*}
  \zeta_p= \int t^2 d\tilde H_p(t),\quad
  h_p(z)= \int \frac{t^2}{t-z}d\tilde H_p(t),\quad
  g_p(z,\tilde z)= \int \frac{t^2}{(t-z)(t-\tilde z)}d\tilde H_p(t),
\end{align*}
which are only functions of the eigenvalues of $\A$.
 Let $$\alpha_{k,p}=\frac1p\tr(\T^k)=\int t^k d\tilde{H}_p(t).$$
By the relations in \eqref{mp0} and \eqref{um0}, the centering term for the statistic $\hat\beta_2$ is
$$\beta_{2,p}\triangleq \int x^2dF^{c_n, \tilde H_p}(x)= {\alpha}_{2,p}+c_n.$$
The approximating mean and covariance of $p[\hat\beta_2-\beta_{2,p}]$ can be figured out through the residue theorem. For illustration, we calculate the integral corresponding to the
first term in $\mu_1(z)$, that is, 
  \begin{align} \label{in1}
    &-\frac{1}{2\pi\rm i}\oint_{\mathcal C_1} z^2 \int\frac{c_n(\um_0'(z)t)^2d \tilde H_p(t)}{\um_0(z)(1+\um_0(z) t)^3} dz.
  \end{align}
Taking derivatives with respect to $z$ on both sides of \eqref{mp0}, we obtain 
\begin{align*}
\um'_0(z)=\left(\frac{1}{\um^2_0(z)}-c_n\int\frac{t^2}{(1+t\um_0(z))^2}d\tilde H_p(t)\right)^{-1}.
\end{align*}
It then follows that 
\begin{align*}
 \eqref{in1}&=\int-\frac{1}{2\pi\rm i}\oint_{\mathcal C_1} z^2 \frac{c_n \um_0'(z)t ^2}{\um_0(z)(1+\um_0(z) t)^3} d\um_0(z) d \tilde H_p(t)\\
 &=\int-\frac{1}{2\pi\rm i}\oint_{\mathcal C_1}  \frac{c_n t ^2 \Big(z\um_0(z)\Big)^2}{\um_0(z)(1+\um_0(z) t)^3} \left(1-c_n\int\frac{u^2\um^2_0(z)}{(1+u\um_0(z))^2}d\tilde H_p(u)\right)^{-1} d\um_0(z) d \tilde H_p(t)\\
 &=-\int \left\{ \frac{c_n t ^2 \Big(z\um_0(z)\Big)^2}{(1+\um_0(z) t)^3} \left(1-c_n\int\frac{u^2\um^2_0(z)}{(1+u\um_0(z))^2}d\tilde H_p(u)\right)^{-1} \bigg|_{\um_0(z)=0}\right\} d \tilde H_p(t)\\
 &=-c_n \alpha_{2,p}.
\end{align*}
Similar procedure can be repeated to find
 the values of  the remaining contour integrals.
 As a result and by Theorem~\ref{clt},  the  distribution
 of $p [\hat \beta_{2}-  \beta_{2,p}]$ is asymptotically equivalent to 
 the Gaussian distribution   $N(\mu,
 \sigma^2)$, where the mean and variance  parameters are
 given by 
\begin{align*}
\mu &=-c_n{\alpha_{2,p}},\\
\sigma^2& = 8c_n ({\alpha^3_{2,p}} - 2 {\alpha_{2,p}} \alpha_{3,p} + \alpha_{4,p})+4 c_n^2 {\alpha_{2,p}^2}+4c_n(\tau-3)({\alpha^3_{2,p}} - 2 {\alpha_{2,p}} \alpha_{3,p} + \alpha_{4,p}).
\end{align*}

\section{Proofs of the main results}

This section presents the proofs of  Theorem \ref{lsd} and Theorem \ref{clt}.
In all the proofs,  we assume the location vector $\m={\bf 0}$, otherwise, it can be directly subtracted from the sample $\{\bx_j\}$.
We will denote by $K$ some constants appearing in inequalities that can vary from place to place.

\subsection{Proof of Theorem \ref{lsd}}\label{sec:proofs0} 

Let $g_j=p/(\z_j'\A \z_j)$ for $j=1,\ldots,n$, and denote
\begin{align*}
\Z=( z_{ij}),\quad \G={\rm diag}(g_1,\ldots,g_n),\quad \B_n=\frac 1n \A^{\frac 12}\Z\G\Z'\A^{\frac 12},\quad \bS_n=\frac 1n \A^{\frac 12}\Z\Z'\A^{\frac 12}.
\end{align*}
Under Assumptions (a)-(c), the generalized MP law holds true for the sample covariance matrix $\bS_n$ \citep{S95}. Thus it's sufficient to show
\begin{align}\label{nbs}
||\B_n-\bS_n||\xrightarrow{a.s.}0.
\end{align}
To this end,
with the moment conditions in Assumption (c), we shall truncate the variables $(z_{ij})$ at $n^{2/\gamma}$ for some $\gamma\in (4, 4+\delta]$.
Some relevant quantities are denoted as below. For $i=1,\ldots,p$ and $j=1,\ldots,n,$
\begin{align*}
&\hat z_{ij}=z_{ij}I(|z_{ij}|^\gamma\leq n^{2}),\quad\hat \z_j=(\hat z_{1j},\ldots,\hat z_{pj})',\quad \hat g_j=p/(\hat\z_j'\A \hat\z_j),\\
&
\widehat\Z=(\hat z_{ij}),\quad \widehat\G={\rm diag}(\hat g_1,\ldots, \hat g_n),\quad
\widehat \B_n=\frac 1n \A^{\frac 12}\widehat\Z\widehat\G\widehat\Z'\A^{\frac 12},\quad
\widehat \bS_n=\frac 1n \A^{\frac 12}\widehat\Z\widehat\Z'\A^{\frac 12}.
\end{align*}
Note that for the truncated variables $(\hat z_{ij})$, the following results hold automatically
\begin{align}\label{trun1}
\begin{array}{l}
\left|\mathbb{E} \hat z_{ij}\right|=o\left(n^{-2+2/\gamma}\right), \ 
\mathbb{E}\left(\hat z_{ij}^{2}\right)=1+o\left(n^{-2+4/\gamma}\right), \
\mathbb{E}\left(\hat z_{ij}^{4}\right)=\tau+o(1), \
\mathbb{E}|\hat z_{ij}|^{\gamma}<\infty,\
 |\hat z_{ij}|^\gamma<n^2,
\end{array}
\end{align}
and
\begin{align}\label{trun2}
\sum_{k} 2^{k} \E\left|z_{ij}\right|^{\gamma/2}I\big(|z_{ij}| > 2^{2k / \gamma} \big)<\infty.
\end{align}
From \eqref{trun2}  and similar arguments as in the proof of  Lemma 5.12 in \cite{BSbook}, we  have
\begin{align}\label{hatb-b}
{\rm P}(\widehat \B_n\neq \B_n,\ {\rm i.o.})={\rm P}(\widehat \bS_n\neq \bS_n,\ {\rm i.o.})=0.
\end{align}
Next we will prove that for any $\varepsilon>0$ and $k\geq 2$,
\begin{align}\label{pb-s}
{\rm P}\left(||\widehat\B_{n}-\widehat\bS_{n}||>\varepsilon\right)\leq K\varepsilon^{-k}\left(n^{-\frac k2+1}+n^{-\frac{k(\gamma-4)}{\gamma}}\right).
\end{align}
Notice that the spectral norm of the difference between $\widehat\B_{n}$ and $\widehat\bS_{n}$ can be bounded by
\begin{align}\label{b-s}
\left\|\widehat\B_{n}-\widehat\bS_{n}\right\|&\leq ||\A||\frac{|| \widehat\Z\widehat\Z'||}{n}\max_{1\leq j\leq n}\left|\hat g_j-1\right|.
\end{align}
From \cite{BS98},  almost surely, the spectral norm $\|\widehat\Z\widehat\Z'\|/n$ is bounded for all large $n$. Thus, we only need to control the convergence rate of  
$
\max_j\left|\hat g_j-1\right|
$
or 
$
\max_j\left|1/\hat g_j-1\right|.
$ 
By Markov's inequality, for any $\varepsilon>0$ and $k\geq 2$, we have
\begin{align}\label{pdj}
{\rm P}\left(\max_j\left|\frac{1}{\hat g_j}-1\right|>\varepsilon\right)
\leq &np^{-k}\varepsilon^{-k}\E\left|\hat\z_1'\A\hat\z_1-p\right|^{k}.
\end{align}
To bound the expectation in \eqref{pdj}, we divide it into three parts
\begin{align*}
     \E|\hat\z_1'\A\hat\z_1-p|^{k}
\leq &K\E\left|\hat\z_1'\A\hat\z_1-\tilde\z_1'\A\tilde\z_1\right|^{k}
+K\E\left|\tilde\z_1'\A\tilde\z_1-\E\tilde\z_1'\A\tilde\z_1\right|^{k}
+K\left|\E\tilde\z_1'\A\tilde\z_1-p\right|^{k}.
\end{align*}
From \eqref{trun1} and the boundedness of $||\A||$, the first term can be controlled by
\begin{align}\label{eq210}
\E\left|\hat\z_1'\A\hat\z_1-\tilde\z_1'\A\tilde\z_1\right|^{k}
\leq  &K\E\left|\tilde \z_1'\A\E\hat\z_1\right|^k+K \left|\E\hat \z_1'\A\E\hat\z_1\right|^k\nonumber\\
\leq  &K\E^{\frac12}\left|\tilde \z_1'\tilde\z_1\right|^{k}\left|\E\hat \z_1'\E\hat\z_1\right|^{\frac k2}+Kn^{(-3+4/\gamma)k}\nonumber\\
\leq  &K\left[\E\left|\tilde \z_1'\tilde\z_1-\E\tilde \z_1'\tilde\z_1\right|^{k}+\left|\E\tilde \z_1'\tilde\z_1\right|^{k}\right]^{\frac12}n^{(-3/2+2/\gamma)k}+Kn^{(-3+4/\gamma)k}\nonumber\\
\leq  &K\E^{\frac12}\left|\tilde \z_1'\tilde\z_1-\E\tilde \z_1'\tilde\z_1\right|^{k}n^{(-3/2+2/\gamma)k}+Kn^{(-3+4/\gamma)k}\nonumber\\
\leq &Kn^{-1/2+(-3/2+4/\gamma)k}+Kn^{(-1+2/\gamma)k},
\end{align}
where the last inequality is from Lemma \ref{bs98}. Again from this lemma, the second term is bounded by
\begin{align}\label{eq211}
\E\left|\tilde\z_1'\A\tilde\z_1-\E\tilde\z_1'\A\tilde\z_1\right|^{k}\leq K\left(n^{ k/2}+n\E|\tilde z_{11}|^{2k}\right)\leq K\left(n^{k/2}+n^{-1+4k/\gamma}\right).
\end{align}
For the third one, we have from \eqref{trun1}
\begin{align}\label{eq212}
\left|\E\tilde\z_1'\A\tilde\z_1-p\right|^{k}=p^k\left|{\rm Var}(\hat\z_{11})-1\right|^k\leq Kn^{(-1+4/\gamma)k}.
\end{align}
Collecting the results in \eqref{eq210}-\eqref{eq212} yields
\begin{align}\label{eq213}
\E|\hat\z_1'\A\hat\z_1-p|^{k}\leq K\left(n^{k/2}+n^{-1+4k/\gamma}\right),
\end{align}
which together with \eqref{b-s} and \eqref{pdj} give the result in \eqref{pb-s}.
Hence, the conclusion of the theorem follows from \eqref{nbs}, \eqref{hatb-b}, and \eqref{pb-s} with some large $k$.

\subsection{Proof of Theorem \ref{clt} }\label{sec:proofs}

\subsubsection{ Sketch of the proof}

Following the truncation step in the proof of Theorem \ref{lsd}, we now centralize the truncated variables $(\hat z_{ij})$. Some quantities are denoted as below.
\begin{align*}
&\tilde z_{ij}=\hat z_{ij}-\E(\hat z_{ij}),\quad \tilde \z_j=(\tilde z_{1j},\ldots, \tilde z_{pj})',\quad \tilde g_j=p/(\tilde\z_j'\A \tilde\z_j),\quad \widetilde\G={\rm diag}(\tilde g_1,\ldots,\tilde g_n),\\
&\widetilde\Z=(\tilde z_{ij}),\quad \widetilde \B_n=\frac 1n \A^{\frac 12}\widetilde\Z\widetilde\G\widetilde\Z'\A^{\frac 12},\quad
\overline \B_n=\frac 1n\A^{\frac 12}\widetilde\Z\widehat\G\widetilde\Z'\A^{\frac 12},
\quad
\widetilde\bS_n=\frac 1n\A^{\frac 12}\widetilde\Z\widetilde\Z'\A^{\frac 12}.
\end{align*}
Similar to the derivation of \eqref{pb-s}, one may show that
\begin{align}\label{pb-s1}
\max\left\{ {\rm P}\left(||\widetilde\B_{n}-\widetilde\bS_{n}||>\varepsilon\right),
{\rm P}\left(||\overline\B_{n}-\widetilde\bS_{n}||>\varepsilon\right)\right\}
\leq K\varepsilon^{-k}\left(n^{-\frac k2+1}+n^{-\frac{k(\gamma-4)}{\gamma}}\right).
\end{align}
It thus follows from \cite{BS98} that, almost surely, $\lim\sup_n ||\widehat\B_n||$, $\lim\sup_n ||\widetilde\B_n||$, and $\lim\sup_n ||\overline\B_n||$ are all bounded. 

Let $F^{\B_n}$, $F^{\widehat \B_n}$, $F^{\overline \B_n}$, and $F^{\widetilde \B_n}$ be the ESDs of the matrices $\B_n$, $\widehat\B_n$, $\overline\B_n$, and $\widetilde \B_n$, respectively.
Then, for each function $f_j(x)$, we have from \eqref{hatb-b}
\begin{align}\label{eq220}
p\left|\int f_j(x)dF^{\B_n}- \int f_j(x)dF^{\widehat\B_n}\right|\xrightarrow{a.s.}0.
\end{align}
By Corollary A.37 in \cite{BSbook}, it holds that
\begin{align}\label{eq222}
p\left|\int f_j(x)dF^{\widehat\B_n}- \int f_j(x)dF^{\overline\B_n}\right|
\leq & K_j\sum_{k=1}^p\left|\lambda_k^{\widehat\B_n}-\lambda_k^{\overline\B_n}\right|\\
\leq &2 K_{j}\left[c_n \tr \A^{\frac 12}\left(\widehat\Z-\widetilde\Z\right)\widehat\G\left(\widehat\Z-\widetilde\Z\right)'\A^{\frac 12}\left(||\widehat\B_n||+||\overline\B_n||\right)\right]^{1 / 2}.\nonumber
\end{align}
where $K_j$ is an upper bound on $|f_j'(x)|$ and $\lambda_k^{\B}$ denotes the $k$-th largest eigenvalue of the matrix $\B$. By \eqref{trun1} and \eqref{pdj}, one may get 
\begin{align*}
\left|\tr \A^{\frac 12}\left(\widehat\Z-\widetilde\Z\right)\widehat\G\left(\widehat\Z-\widetilde\Z\right)'\A^{\frac 12}\right|\leq ||\A||\max_j|\hat g_j|\tr(\E\widehat\Z\E\widehat\Z')\xrightarrow{a.s.}0,
\end{align*}
and thus \eqref{eq222} is $o_{a.s.}(1)$.
Moreover, from \eqref{pdj} and \eqref{eq210}, applying Markov's inequality, we have also 
\begin{align}\label{eq223}
p\left|\int f_j(x)dF^{\overline\B_n}- \int f_j(x)dF^{\widetilde\B_n}\right|
\leq &K_jp||\overline\B_n-\widetilde\B_n||\nonumber\\
\leq &K_jp||\A||\cdot\left\|\frac1n \widetilde\Z\widetilde\Z'\right\|\max_j|\hat g_j-\tilde g_j|\xrightarrow{a.s.}0.
\end{align}
Collecting \eqref{eq220}, \eqref{eq222}, and \eqref{eq223}, we get
\begin{align}\label{eq221}
p\left|\int f_j(x)dF^{\B_n}- \int f_j(x)dF^{\widetilde\B_n}\right|\xrightarrow{a.s.}0.
\end{align}
 Therefore, it is sufficient to prove the theorem by replacing the matrix $\B_n$ with its truncated and centralized version $\widetilde \B_n$, or equivalently, we assume
\begin{align}\label{tcr}
\E(z_{11})=0,\quad \E(z_{11}^2)=1,\quad \E(z_{11}^4)=\tau+o(1),\quad \E(|z_{11}|^\gamma)<\infty,\quad \max_{i,j}|z_{ij}|^\gamma<n^2,
\end{align}
with $\gamma=5$ for the proof of the theorem.
Note that we assume $\E(z_{11}^2)=1$ without rescaling $(\tilde z_{ij})$ since the sample spatial-sign vectors $\{\A^{1/2}\z_j/\|\A^{1/2}\z_j\|\}$ are all self-normalized.

Next we define a rectangular contour enclosing the interval $I_c=[s_l, s_r]$,
\begin{equation}\label{slr}
s_l=\liminf_{p\rightarrow\infty}\lambda_{\min}^{\T}(1-\sqrt{c})^2I_{(0,1)}(c)\quad  \text{and}\quad s_r=\limsup_{p\rightarrow\infty}\lambda_{\max}^{\T}(1+\sqrt{c})^2,	
\end{equation}
and thus enclosing all supports of the LSDs $\{F^{c_n,\tilde H_p}\}$. 
Choosing two numbers $x_l<x_r$ such that $[s_l, s_r]\subset (x_l,x_r)$
and letting $v_0 > 0$ be arbitrary,  then the contour can be described as 
\begin{align*}
\mathcal C= \{x\pm iv_0: x\in[x_l,x_r]\}\cup \{x+iv: x\in \{x_r,x_l\}, v\in[-v_0,v_0]\}. 
\end{align*} 
Denote
\begin{align*}
m_n(z)&=\int \frac{1}{x-z}dF^{\B_n}(x),\quad m_0(z)=\int \frac{1}{x-z}dF^{c_n,\tilde H_p}(x),\quad \um_n(z)=	-\frac{1-c_n}{z}+c_nm_n(z),
\end{align*}
We then  
define a random process on $\mathcal C$ as 
$$ 
M_{n}(z)=p[m_{n}(z)-m_{0}(z)]=n[\um_{n}(z)-\um_{0}(z)],\quad z\in\mathcal C, 
$$
where $\um_0(z)$ is defined in \eqref{um0}. 
From Cauchy's integral formula, for any $k$ analytic functions $(f_\ell)$ and complex numbers $(a_\ell)$, we have
\begin{align*}
	\sum_{\ell= 1}^kp a_\ell\int f_\ell(x)dG_n(x)=-\sum_{\ell= 1}^k\frac{ a_\ell}{2\pi \rm i}\oint_{\mathcal C}f_\ell(z) M_n(z)dz
\end{align*}
when all sample eigenvalues fall in the interval $(x_l, x_r)$, which holds with probability $1-o(n^{-s})$ for any $s>0$. That is,
	\begin{align}\label{hprob}
	{\rm P}(||\B_n||>x_r)=o(n^{-s})\quad\text{and}\quad 	{\rm P}(\lambda_{\min}^{\B_n}<x_l)=o(n^{-s}),\ \forall s>0,
	\end{align}
which follows from \eqref{pb-s1} and a similar conclusion for $\bS_n$ \citep{BS04}.
 In order to deal with the small probability event where some eigenvalues are outside  the interval $(x_l, x_r)$ in finite dimensional situations,
\cite{BS04} suggested truncating $M_n(z)$ as,  for $z=x+{\rm i}v\in\mathcal C$,  
\begin{align*} 
\widehat M_{n}(z)= 
\begin{cases} 
M_{n}(z)&z\in \mathcal C_{n},\\ 
M_{n}(x+{\rm i}n^{-1}\varepsilon_n)& x\in \{x_l,x_r\}\quad\text{and}\quad v\in[0, n^{-1}\varepsilon_n],\\ 
M_{n}(x-{\rm i}n^{-1}\varepsilon_n)& x\in \{x_l,x_r\}\quad\text{and}\quad v\in[-n^{-1}\varepsilon_n, 0], 
\end{cases} 
\end{align*} 
where $ \mathcal C_{n}= \{x\pm {\rm i}v_0: x\in[x_l,x_r]\}\cup \{x\pm {\rm i}v: x\in\{x_l, x_r\}, v\in[n^{-1}\varepsilon_n, v_0]\},$
a regularized version of $\mathcal C$ excluding a small segment near the real line,
and the positive sequence $(\varepsilon_n)$ decreases to zero satisfying  $\varepsilon_n>n^{-a}$ for some $a\in(0,1)$. From this and \eqref{hprob}, one may thus find
\begin{align*}
	\oint_{\mathcal C}f_\ell (z)M_n(z)dz=\oint_{\mathcal C} f_\ell(z)\widehat M_n(z)dz+o_p(1),
	\end{align*}
for every $\ell\in \{1,\ldots, k\}$.
Hence, the proof of Theorem \ref{clt} can be completed by verifying the convergence of $\widehat M_n(z)$ on $\mathcal C$ as stated in the following lemma.

\begin{lemma}\label{clt-mn} 
In addition to Assumptions (a)-(c), suppose that the conditions in \eqref{tcr} hold with $\gamma=5$. We have
	$$
	\widehat M_n(z)\stackrel{d}{=}M_0(z)+o_p(1),\quad z\in \mathcal C,
	$$
	where the random process $M_0(z)$ is a two-dimensional Gaussian process. The mean function is 
	\begin{align} 
	{\E}M_0(z) 
	=\mu_1(z)+(\tau-3)\mu_2(z),
	\label{mean} 
	\end{align} 
and the covariance function is
	\begin{align*} 
	\Cov(M_0(z),M_0(\zt))=\sigma_1(z,\zt)+(\tau-3)\sigma_2(z,\zt).
	\end{align*}
\end{lemma}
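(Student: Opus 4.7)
The proof follows the classical Bai--Silverstein template for CLTs of linear spectral statistics, but with two essential modifications tailored to the sign structure of $\B_n$. I would split $\widehat M_n(z)$ into the random fluctuation $\widehat M_n(z)-\E\widehat M_n(z)$ and the bias $\E\widehat M_n(z)-M_0(z)$, and carry out first a \emph{preliminary reduction} to the known-mean matrix $\B_n^0$ of \eqref{bn0}. Writing $\s_j^0:=\s(\bx_j-\bmu)=\A\z_j/\|\A\z_j\|$, the summands of $\B_n^0$ become i.i.d.\ functions of the $\z_j$, which is the structural property needed for the martingale and resolvent arguments below. The excess generated by replacing $\hat\bmu$ with $\bmu$ is handled by Taylor-expanding $\s(\bx_j-\hat\bmu)$ around $\bmu$, using the first-order condition $\sum_j\s(\bx_j-\hat\bmu)=\mathbf{0}$ to characterise $\hat\bmu-\bmu$, and then applying a Sherman--Morrison-type perturbation to $(\B_n-z\I)^{-1}$. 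The quantity $r_w$ enters this step through the moment identities $\E\|\bx_j-\bmu\|^{-1}\sim\E(w^{-1})/\sqrt p$ and $\E\|\bx_j-\bmu\|^{-2}\sim\E(w^{-2})/p$, which use the independence in Assumption (e). The net contribution of this reduction is the deterministic drift $\kappa(z)$.

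For the centred part I apply the martingale CLT. Let $\E_j$ denote conditional expectation given $\z_1,\ldots,\z_j$, and write
\begin{align*}
\tr(\B_n^0-z\I)^{-1}-\E\tr(\B_n^0-z\I)^{-1}=\sum_{j=1}^n(\E_j-\E_{j-1})\tr(\B_n^0-z\I)^{-1}.
\end{align*}
Denoting by $\mathbf{D}_j(z):=\B_n^{0,(j)}-z\I$ the leave-one-out resolvent, Sherman--Morrison applied to the rank-one update $\B_n^0=\B_n^{0,(j)}+(p/n)\s_j^0(\s_j^0)'$ expresses each martingale increment as an explicit scalar function of $\s_j^0$ and $\mathbf{D}_j^{-1}(z)$. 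Computing the joint conditional covariance at two points $z,\zt\in\mathcal C_n$ reduces, after standard concentration estimates, to tracking the first two moments of the quadratic forms $(\s_j^0)'\mathbf{D}_j^{-1}(z)\s_j^0$ and $(\s_j^0)'\mathbf{D}_j^{-1}(z)\mathbf{D}_j^{-1}(\zt)\s_j^0$. Passing to the limit yields $\sigma_1(z,\zt)+(\tau-3)\sigma_2(z,\zt)$, where the Hadamard-type objects $\zeta_p$, $g_p$ appear as the fourth-cumulant contribution of the entries of $\z_j$. The Lindeberg condition is verified via $\|\s_j^0\|=1$ together with a uniform lower bound on the denominator $1+(p/n)(\s_j^0)'\mathbf{D}_j^{-1}(z)\s_j^0$. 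Finite-dimensional convergence combined with equicontinuity of $\widehat M_n$ on the regularised contour $\mathcal C_n$ upgrades this to convergence of processes.

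The mean-shift calculation proceeds by the deterministic-equivalent route. Starting from
\begin{align*}
\I+z(\B_n^0-z\I)^{-1}=\B_n^0(\B_n^0-z\I)^{-1}=\frac{p}{n}\sum_{j=1}^n\s_j^0(\s_j^0)'(\B_n^0-z\I)^{-1},
\end{align*}
I take expectation, detach each $\s_j^0$ from the full resolvent via Sherman--Morrison, and expand the resulting scalars to one order beyond the fixed-point equation \eqref{mp} defining $\um_0$. The leading correction splits into a universal piece, which produces $\mu_1(z)$, and a fourth-cumulant piece involving $\tr[(\A'(\bSigma-zI)^{-1}\A)\circ(\A'\A)]$ and its derivatives, which produces $(\tau-3)\mu_2(z)$ through the quantities $\zeta_p$, $h_p$, $g_p$.

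The principal obstacle in both calculations is the random denominator $\|\A\z_j\|^{-2}$ that is absent from classical sample-covariance CLTs. Quadratic forms have to be analysed in the form
\begin{align*}
(\s_j^0)'\bM\s_j^0=\frac{\z_j'\A'\bM\A\z_j}{\|\A\z_j\|^2},
\end{align*}
and, since $\|\A\z_j\|^2=p+O_p(\sqrt p)$, the denominator must be expanded to the next order; the joint fluctuations of numerator and denominator at scale $p^{-1/2}$ are precisely what generate the non-standard terms $\mu_1$, $\mu_2$, $\sigma_2$. Controlling these expansions uniformly over $z\in\mathcal C_n$, all the way down to the cut-off $n^{-1}\varepsilon_n$ near the real axis, is the delicate estimate that drives the entire proof and that distinguishes the SSCM setting from the standard independent-components CLT.
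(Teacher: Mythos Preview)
Your proposal is correct and follows essentially the same architecture as the paper: a three-way split into (i) the reduction $\B_n\to\B_n^0$ producing $\kappa(z)$, (ii) a martingale CLT for the centred resolvent of $\B_n^0$ producing $\sigma_1+(\tau-3)\sigma_2$, and (iii) a deterministic-equivalent expansion for the bias producing $\mu_1+(\tau-3)\mu_2$; you also correctly identify the Taylor expansion of the random denominator $\|\A\z_j\|^{-2}$ (formalised in the paper as Lemma~\ref{double-e}) as the key new ingredient. The only place where your sketch understates the work is Step~(i): the paper does not get away with a single Sherman--Morrison perturbation but instead expands $\B_n-\B_n^0$ into three structured matrices $\bDelta_1,\bDelta_2,\bDelta_3$ (Lemma~\ref{difference}), and the limit $\kappa(z)$ emerges only after a careful analysis of $\tr\D^{-2}\bDelta\D^{-1}\bDelta$ and the cubic term $\tr(\D^{-1}\bDelta)^3(\D+\bDelta)^{-1}$ (Lemmas~\ref{t1}--\ref{t3}), with $r_w$ entering through $\bDelta_2$ rather than directly through moment identities for $\|\bx_j-\bmu\|^{-k}$.
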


\subsubsection{Proof of Lemma \ref{clt-mn}}
	
Some quantities are listed below which will be used frequently throughout  this proof. 
\begin{align*} 
&\s_{j}=\s(\bx_j),\quad \br_j=\sqrt{p/n}\s_{j},\quad \bSigma=n\E \br_1\br_1',\\
&\D(z)=\B_{n}-zI,\quad 
\D_{j}(z)=\D (z)-\br_j\br'_j,\quad
\D_{ij}(z)=\D (z)-\br_i\br_i'-\br_j\br'_j,\ (i\neq j),\\ 
&\varepsilon_{j}(z)=\br_j'\D_{j}^{-1}(z)\br_j-\frac{1}{n}\tr\bSigma\D_{j}^{-1}(z),\quad
\gamma_{j}(z)=\br_j'\D_{j}^{-1}(z)\br_j-\frac{1}{n}\E\tr\bSigma\D_{j}^{-1}(z),\\
&\delta_{j}(z)=\br_j'\D_{j}^{-2}(z)\br_j-\frac{1}{n}\tr\bSigma \D_{j}^{-2}(z),\\ 
&\beta_{j}(z)=\frac{1}{1+\br_j'\D_{j}^{-1}(z)\br_j},\quad \beta_{j k}(z)=\frac{1}{1+\br_j'\D_{kj}^{-1}(z)\br_j},\\
&\bar \beta_{j}(z)=\frac{1}{1+n^{-1}\tr\bSigma \D_{j}^{-1}(z)}, \quad \bar \beta_{j k}(z)=\frac{1}{1+n^{-1}\tr\bSigma \D_{k j}^{-1}(z)},\\ 
& b_{n}(z)=\frac{1}{1+n^{-1}\E\tr\bSigma \D_{j}^{-1}(z)}, \quad \bar b_{n}(z)=\frac{1}{1+n^{-1}\E\tr\bSigma \D_{kj}^{-1}(z)}. 
\end{align*}
Note that by Lemma \ref{st}, the matrix $\bSigma$ and $\T$ defined in \eqref{tcr} are asymptotically equivalent and the last six quantities are bounded in absolute value by $|z|/v$ for any $z=u+iv\in \mathbb C^+$.
Now we split $\widehat M_{n}(z)$ into two parts as  
	\begin{align*}
	\widehat M_{n}(z) = & p[m_{n} (z) - \E m_{n} (z)]+p[\E m_{n}(z)-m_0(z)]\\
	:=&M_{n}^{(1)}(z)+ M_{n}^{(2)}(z).
	\end{align*}
	Hence, the convergence of $\widehat M_{n}(z)$ can be obtained through the following three steps.
	\begin{itemize}
		\item[]{\bf Step 1}: Finite dimensional convergence of $M_n^{(1)}(z)$.
		Let $z_1,\ldots,z_q$ be any $q$ complex numbers on $\mathcal C_n$, this step approximates joint distribution of 
		\begin{align}\label{rv}
		\big[M^{(1)}_n(z_1),\ldots, M^{(1)}_n(z_q)\big]
		\end{align} 
		through martingale CLT \citep{B95}. Beyond the techniques used in  \cite{BS04}, a particularly important problem is to find new approaches to deal with the non-linear correlation structure among the entries of $\s(\bx_j)$.  And such non-linear correlation is actually introduced by the spatial-sign transform of the data, to be precise, the norm $\|\bx_j\|$ that appears in the denominator of $\s(\bx_j)$. To this end, by giving an asymptotic expansion of  $\s(\bx_j)$, we develop Lemma \ref{double-e} concerning the covariance of certain quadratic forms, which turns out to be one of the cornerstones for establishing our new CLT.
		

		\item[] {\bf Step 2}: Tightness of $M_n^{(1)}(z)$  on $\mathcal C_n$. We illustrate in this step the basic idea for proving the tightness. As shown in \eqref{hprob}, the probability of extreme eigenvalues falling outside the contour $\mathcal C$ can be well controlled. By virtue of this and Lemma \ref{ineq}, the tightness can be obtained following similar arguments in \cite{BS04}.
		
		\item[] {\bf Step 3}: Convergence of $M_n^{(2)}(z)$. In this final step, we approximate the quantity $M_n^{(2)}(z)$. In parallel with {\bf Step 1}, dealing with the nonlinear effects as shown in Lemma \ref{double-e} is the main focus in this part.
		As will be seen, such nonlinear effects will contribute several new terms to the mean of $\bxi_n$.
	\end{itemize}

\medskip

\textit{\bf Step 1: Finite dimensional convergence of $M_{n}^{(1)}(z)$ in distribution.}  
\smallskip

 Let $\E_0(\cdot)$ denote expectation and $\E_j(\cdot)$ denote conditional expectation with respect to the $\sigma$-field generated by $\br_1,\ldots,\br_j$, $j=1,\ldots,n$. 
From the martingale decomposition and the identity  
\begin{align}\label{D-D} 
\D ^{-1}(z)-\D_{j}^{-1}(z)=-\D_{j}^{-1}(z)\br_j\br_j'\D_{j}^{-1}(z)\beta_{j}(z), 
\end{align} 
we get 
\begin{align} 
M_{n}^{(1)}(z)
&=\sum_{j=1}^n(\E_j-\E_{j-1})\tr\left[\D ^{-1}(z)-\D_{j}^{-1}(z)\right]\nonumber\\
&=\sum_{j=1}^n(\E_j-\E_{j-1})\frac{d\log(\beta_j(z)/\bar{\beta}_j(z))}{d z},\nonumber\\
&=\frac{d}{d z}\sum_{j=1}^n(\E_j-\E_{j-1})\log[1-\bar\beta_j(z)\varepsilon_j(z)+\bar\beta_j(z)\beta_j(z)\varepsilon_j^2(z)],\label{md} 
\end{align} 
where the last equality is from the identity $\beta_{j}(z)=\bar \beta_{j}(z)-\bar\beta_{j}^2(z)\varepsilon_{j}(z)+\bar\beta_{j}^2(z)\beta_{j}(z)\varepsilon_{j}^2(z)$.
From Lemma \ref{ineq} and the boundedness of $\beta_j(z)$ and $\bar\beta_j(z)$, 
we   have
$$
	\E\bigg|\sum_{j=1}^n(\E_j-\E_{j-1})\bar\beta_j(z)\beta_j(z)\varepsilon^2_j(z)\bigg|^2\leq Kn	\E\left|\varepsilon_j(z)\right|^4	\to0.
$$
Thus applying Taylor's expansion to the log function in \eqref{md}, one may conclude
\begin{align*}
M_n^{(1)}(z)&=-\frac{d}{dz}\sum_{j=1}^n (\E_j-\E_{j-1})\bar\beta_j(z)\varepsilon_j(z)+o_p(1)\\
&=-\frac{d}{dz}\sum_{j=1}^n \E_j\bar\beta_j(z)\varepsilon_j(z)+o_p(1).
\end{align*}
Therefore, we turn to consider the martingale difference sequence  
$$ 
Y_{nj}(z):=\frac{d}{dz}\E_j\bar\beta_{j}(z)\varepsilon_{j}(z),\ j=1,\ldots,n.
$$ 
The Lyapunov condition for this sequence is guaranteed by the fact that 
\begin{align*} 
\sum_{j=1}^n \E\left|Y_{nj}(z)\right|^4
&=\sum_{j=1}^n \E\left|\E_j\left(\delta_j(z)\bar\beta_j(z)-\varepsilon_j(z)\bar\beta_j^2(z)\frac{1}{n}\tr\bSigma \D_j^{-2}(z)\right)\right|^4\\ 
&	\leq  K\sum_{j=1}^n 
	\left(\frac{|z|^4\E|\delta_{j}(z)|^4}{v^4}+\frac{|z|^8p^4\E|\varepsilon_{j}(z)|^4}{v^{16}n^4}\right)\to0, 
\end{align*} 
where the convergence is from Lemma \ref{ineq}.

We next consider the sum
$
\sigma_n(z, \zt)\triangleq \sum_{j=1}^n \E_{j-1}\left[Y_{nj}(z)Y_{nj}(\zt)\right],
$
for $z\neq \zt\in\{z_1,\ldots,z_w\}$.
Notice that
\begin{equation}\label{bnz} 
\E|\bar\beta_j(z)-b_n(z)|\leq \frac{K}{n}\E\left|\tr\bSigma \D_j^{-1}(z)-\E\tr\bSigma \D_j^{-1}\right|\to0 \quad\text{and}\quad  b_{n}(z)+z\um_0(z)\rightarrow 0, 
\end{equation}
which follow from \cite{BS04} and Lemma \ref{st}, and thus we have
\begin{equation*}\label{cov-term-2} 
\sigma_n(z, \zt)=\frac{\partial^2}{\partial z\partial \tilde z}z\zt\um_0(z)\um_0(\zt)\sum_{j=1}^n\E_{j-1}\left(\E_j\varepsilon_j(z)\E_j\varepsilon_j(\zt)\right) +o_p(1).
\end{equation*} 
Moreover, applying Lemma \ref{double-e} to the above conditional expectations, one may get  
\begin{align*}
&z\zt\um_0(z)\um_0(\zt)\sum_{j=1}^n\E_{j-1}\left(\E_j\varepsilon_j(z)\E_j\varepsilon_j(\zt)\right)\\
=& 2T_1+ \frac 2p\tr(\T^2)T_2-2T_3-2T_4+(\tau-3)(T_5+T_6-T_7-T_8)+o(1), 
\end{align*} 
where  
\begin{align*} 
	T_1&=\frac{z\zt\um_0(z)\um_0(\zt)}{n^2}\sum_{j=1}^n\tr\left[\E_j\T \D_{j}^{-1}(z)\E_j\T \D_{j}^{-1}(\zt)\right],\\ 
	T_2&=\frac{z\zt\um_0(z)\um_0(\zt)}{pn^2}\sum_{j=1}^n\tr\left[\E_j\T \D_{j}^{-1}(z)\right]\tr\left[\E_j\T \D_{j}^{-1}(\zt)\right],\\ 
	T_3&=\frac{z\zt\um_0(z)\um_0(\zt)}{pn^2}\sum_{j=1}^n\tr\left[\E_j\T^2 \D_{j}^{-1}(z)\right]\tr\left[\E_j\T \D_{j}^{-1}(\zt)\right],\\ 
	T_4&=\frac{z\zt\um_0(z)\um_0(\zt)}{pn^2}\sum_{j=1}^n\tr\left[\E_j\T \D_{j}^{-1}(z)\right]\tr\left[\E_j\T^2 \D_{j}^{-1}(\zt)\right],\\
	T_5&=\frac{z\zt\um_0(z)\um_0(\zt)}{n^2}\sum_{j=1}^n\tr\left[\E_j(\T^{\frac 12} \D_{j}^{-1}(z)\T^{\frac 12})\circ\E_j(\T^{\frac 12}\D_{j}^{-1}(\zt)\T^{\frac 12})\right], \\
	T_6&=\frac{z\zt\um_0(z)\um_0(\zt)}{p^2n^2}\sum_{j=1}^n\tr\left[\E_j\T \D_{j}^{-1}(z)\right]\tr\left[\E_j\T \D_{j}^{-1}(\tilde z)\right]\tr\left[\T \circ \T\right],\\
	T_7&=\frac{z\zt\um_0(z)\um_0(\zt)}{pn^2}\sum_{j=1}^n\tr\left[\E_j\T \D_{j}^{-1}(z)\right]\tr\left[\E_j(\T^{\frac 12} \D_{j}^{-1}(\tilde z)\T^{\frac 12})\circ \T\right],\\
	T_8&=\frac{z\zt\um_0(z)\um_0(\zt)}{pn^2}\sum_{j=1}^n\tr\left[\E_j(\T^{\frac 12} \D_{j}^{-1}( z)\T^{\frac 12})\circ \T\right]\tr\left[\E_j\T \D_{j}^{-1}(\tilde z)\right].
\end{align*}
Following similar steps as in \cite{BS04} and \cite{H19},  applying Lemma \ref{st} and Lemma \ref{ineq}, we obtain
\begin{align*}
T_1&=\log\frac{\um_0(z)-\um_0(\zt)}{\um_0(z)\um_0(\zt)(z-\zt)}+o_p(1),\\
T_2=\frac{T_6}{\zeta_p}&=c_n\int\frac{t\um_0(z)d \tilde H_p(t)}{1+t\um_0(z)}\int\frac{t\um_0(\zt)d \tilde H_p(t)}{1+t\um_0(\zt)}+o_p(1)\\
&=\frac{[1+z\um_0(z)][1+\zt\um_0(\zt)]}{c_n}+o_p(1).
\end{align*}
Notice that statistics $T_3$ and $T_4$ will reduce to $T_2$ if $\T^2$ is replaced with $\T$. By this, we have 
\begin{align*} 
	T_3&=c_n\int\frac{t^2\um_0(z)d \tilde H_p(t)}{1+t\um_0(z)}\int\frac{t\um_0(\zt)d \tilde H_p(t)}{1+t\um_0(\zt)}+o_p(1)\\
	&=\bigg[1-\frac{1+z\um_0(z)}{c_n\um_0(z)}\bigg][1+\zt\um_0(\zt)]+o_p(1),\\ 
	T_4&=c_n\int\frac{t\um_0(z)d \tilde H_p(t)}{1+t\um_0(z)}\int\frac{t^2\um_0(\zt)d \tilde H_p(t)}{1+t\um_0(\zt)}+o_p(1)\\
	&=\bigg[1-\frac{1+\zt\um_0(\zt)}{c_n\um_0(\zt)}\bigg][1+z\um_0(z)]+o_p(1). 
\end{align*} 
For the terms $T_5$, $T_7$, and $T_8$, following similar procedure as in   \cite{PZ08} for proving their Theorem 1.4, using Lemma \ref{st}, Lemma \ref{ineq}, and Theorem \ref{lsd}, one may get
\begin{align*}
T_5&=\frac{1}{n}\tr\left[(\T^{\frac 12} (\um_0^{-1}(z)\I+\T)^{-1}\T^{\frac 12})\circ(\T^{\frac 12} (\um_0^{-1}(\zt)\I+\T)^{-1}\T^{\frac 12})\right]+o_p(1) \\
&=c_ng_p \left(\frac{-1}{\um_0 (z)},\frac{-1}{\um_0(\zt)}\right)+o_p(1),\\
T_7&=\frac{1}{pn}\tr\left[\T (\um_0^{-1}(z)\I+\T)^{-1}\right]\tr\left[(\T^{\frac 12}(\um_0^{-1}(\zt)\I+\T)^{-1}\T^{\frac 12})\circ \T\right]+o_p(1)\\
&= h_p\left(\frac{-1}{\um_0(\zt)}\right)[1+z\um_0(z)]+o_p(1),\\
T_8&=\frac{1}{pn}\tr\left[(\T^{\frac 12} (\um_0^{-1}(z)\I+\T)^{-1}\T^{\frac 12})\circ \T\right]\tr\left[\T (\um_0^{-1}(\zt)\I+\T)^{-1}\right]+o_p(1)\\
&= h_p\left(\frac{-1}{\um_0(z)}\right)[1+\zt\um_0(\zt)]+o_p(1).
\end{align*}
Collecting the above results we get 
\begin{equation*} 
\eqref{rv}\stackrel{d}{=}\left[M_0^{(1)}(z_1),\ldots,M_0^{(1)}(z_q)\right]+o_p(1), 
\end{equation*} 
where $[M_0^{(1)}(z_1),\ldots, M_0^{(1)}(z_q)]$ is a $q$-dimensional zero-mean Gaussian random vector with covariance function
$$
\Cov[M_0^{(1)}(z),M_0^{(1)}(\zt)]=\sigma_1(z,\zt)+(\tau-3)\sigma_2(z,\zt).
$$

\textbf{Step 2: Tightness of $M_n^{(1)}(z)$.} 
The tightness can be established by verifying the moment condition (12.51) of \cite{B68}: 
\begin{equation}\label{tightness} 
\sup_{n,z_1,z_2\in \mathcal C_n}\frac{\E|M_n^{(1)}(z_1)-M_n^{(1)}(z_2)|^2}{|z_1-z_2|^2}<\infty.
\end{equation} 
By \eqref{hprob} and arguments in \cite{BS04}, the moments of $\D^{-1}(z)$, $\D^{-1}_j(z)$ and $\D^{-1}_{ij}(z)$ are uniformly bounded in $n$ and $z\in \mathcal C_n$, that is, for any positive $k$,   
\begin{equation}\label{D-bound} 
\max \{ \E||\D^{-1}(z)||^k, \E||\D^{-1}_j(z)||^k, \E||\D^{-1}_{ij}(z)||^k\}\leq K. 
\end{equation}
By such boundedness, the inequality in Lemma \ref{ineq} can be extended to 
\begin{align} 
\left|\E\left[a(v)\prod_{l=1}^{k}\left(\br'\B_l(v)\br-\frac1n\tr\bSigma \B_l(v)\right)\right]\right| 
\leq Kn^{-1- k(\gamma-4)/\gamma},\quad k\geq 2.\label{ineq-ex} 
\end{align} 
The matrices $\B_l(v)$ in \eqref{ineq-ex} are independent of $\br$ and 
\begin{equation*} 
\max\{|a(v)|, ||\B_l(v)|| \}\leq K\left[1+p^{s}I\left(||\B_n||\geq x_r \ \text{or}\ \lambda_{\min}^{\tilde \B}\leq x_l\right)\right] 
\end{equation*} 
for some positive $s$, where $\tilde \B$ denotes $\B_n=\sum \br_j\br_j'$, $\B_j=\sum_{k\neq j}\br_k\br_k'$, or $\B_{ij}=\sum_{k\neq i,j}\br_k\br_k'$.
Finally, following similar procedure as in Section 3 of  \cite{BS04}, and applying Lemma \ref{st}, Lemma \ref{ineq} together with \eqref{hprob}, \eqref{D-bound}, and \eqref{ineq-ex},  one may verify \eqref{tightness}. The details are thus omitted.

\textbf{Step 3: Convergence of $M_n^{(2)}(z)$.} 
To finish the proof, it is enough to show that the sequence of $M_n^{(2)}(z)$ is bounded and equicontinuous, and is equal to the mean function \eqref{mean} asymptotically. 
The boundedness and equicontinuity can be verified following the arguments in \cite{BS04}. We next propose 
a novel method to approximate $M_n^{(2)}(z)$, which is quite different from the idea in \cite{BS04}. This new procedure is more straightforward and easier to follow. Before the proof,
we first list some results that will be used in this part: 
\begin{align}
&\sup_{z\in\mathcal C_n}\E|\varepsilon_j(z)|^k\leq Kn^{-1- k(\gamma-4)/\gamma},\quad \sup_{z\in\mathcal C_n}\E|\gamma_j(z)|^k\leq Kn^{-1- k(\gamma-4)/\gamma},\label{varep-bound}\\ 
&\sup_{n,z\in\mathcal C_n}|b_n(z)+z\um_0(z)|\to0,\quad\sup_{n,z\in\mathcal C_n}||z\I- b_{n}(z)\bSigma||^{-1}<\infty,\label{supsup}\\ 
&\sup_{n,z\in\mathcal C_n}\E|\tr \D^{-1}(z)\bM-\E\tr \D^{-1}(z)\bM|^2\leq K||\bM||^2,\label{tr-dm-bound} 
\end{align} 
where $k\geq 2$ and $\bM$ is a nonrandom $p\times p$ matrix. 
These results can be verified step by step following the discussions in \cite{BS04} and we omit the details.

Writing $\bV(z)=zI- b_{n}(z)\bSigma$, we decompose $M_n^{(2)}(z)$ in two ways: 
\begin{align*} 
M_n^{(2)}(z)&=[p\E m_n (z)+\tr \bV^{-1}(z)]-[\tr \bV^{-1}(z)+pm_0(z)]
:= S_{n}(z)-T_n(z),\\ 
M_n^{(2)}(z)&=[n\E\um_n (z)+nb_n(z)/z]-[nb_n(z)/z+n\um_0(z)]
:= \uS_n(z)-\uT_n(z). 
\end{align*} 
Notice that by Lemma \ref{st},
\begin{align*} 
T_n(z)&=p\int \frac{d \tilde H_p(t)}{z- b_{n}(z)t}-p\int \frac{d \tilde H_p(t)}{z+z\um_{0}(z)t}+o(1)\\ 
&=p\left[ b_{n}(z)+z\um_{0}(z)\right]\int \frac{td \tilde H_p(t)}{(z- b_{n}(z)t)(z+z\um_{0}(z)t)}+o(1)\\ 
&=c_n\uT_n(z)\int \frac{td \tilde H_p(t)}{(z- b_{n}(z)t)(1+\um_{0}(z)t)}+o(1). 
\end{align*} 
From this and the convergence in \eqref{supsup}, we have 
\begin{align}\label{tnt} 
\frac{M_n^{(2)}(z)-S_n(z)}{M_n^{(2)}(z)-\uS_n(z)}&=\frac{T_n(z)}{\uT_n(z)}=\frac{c_n}{z}\int \frac{td \tilde H_p(t)}{(1+\um_{0}(z)t)^2}+o(1).
\end{align} 

Our next task is to study the convergence of $S_n(z)$ and $\uS_n(z)$. For simplicity of notation, we suppress the expression $z$ in the sequel when it is served as independent variables of some functions.  
All expressions and convergence statements hold uniformly for $z\in \mathcal C_n$.  

We first simplify the expression of $S_n$. Using the identity $\br_j'\D^{-1}=\br_j'\D_j^{-1}\beta_j$, we have 
\begin{align} 
S_n&=\E\tr(\D^{-1}+\bV^{-1})
=\E\tr\left[\bV^{-1}\left(\sum_{j=1}^n\br_j\br_j'- b_{n}\bSigma\right)\D^{-1}\right]\nonumber\\ 
&=n\E \beta_1\br_1'\D_1^{-1}\bV^{-1}\br_1-b_{n}\E\tr\bSigma \D^{-1}\bV^{-1}.\label{pn} 
\end{align} 
From \eqref{D-D} and $\beta_1=b_n-b_n\beta_1\gamma_1$, 
\begin{align*} 
\E\tr \bV^{-1}\bSigma (\D_1^{-1}-\D^{-1}) 
&=\E\tr \bV^{-1}\bSigma \D_1^{-1}\br_1\br_1'\D_1^{-1}\beta_1\nonumber\\ 
&=b_n\E(1-\beta_1\gamma_1) \br_1'\D_1^{-1}\bV^{-1}\bSigma \D_1^{-1}\br_1, 
\end{align*} 
where $|\E\beta_1\gamma_1\br_1'\D_1^{-1}\bV^{-1}\bSigma \D_1^{-1}\br_1|\leq Kn^{-1/2}$. 
From this and \eqref{pn}, we get 
\begin{align*} 
S_n=n\E \beta_1\br_1'\D_1^{-1}\bV^{-1}\br_1-b_{n}\E\tr\bSigma \D_1^{-1}\bV^{-1}+\frac{1}{n}b_n^2\E\tr \D_1^{-1}\bV^{-1}\bSigma \D_1^{-1}\bSigma+o(1). 
\end{align*} 
Then plugging $\beta_1=b_n-b_n^2\gamma_1+b_n^3\gamma_1^2-\beta_1b_n^3\gamma_1^3$ into the first term in the above equation, we obtain 
\begin{align*} 
n\E \beta_1\br_1'\D_1^{-1}\bV^{-1}\br_1&=b_n\E \tr \D_1^{-1}\bV^{-1}\bSigma-nb_n^2\E\gamma_1\br_1'\D_1^{-1}\bV^{-1}\br_1\\
&+nb_n^3\E\gamma_1^2\br_1'\D_1^{-1}\bV^{-1}\br_1-nb_n^3\E\beta_1\gamma_1^3\br_1'\D_1^{-1}\bV^{-1}\br_1. 
\end{align*} 
Note that, from \eqref{ineq-ex}, \eqref{varep-bound}, and \eqref{tr-dm-bound}, 
\begin{align*} 
\E\gamma_1\br_1'\D_1^{-1}\bV^{-1}\br_1 
&=\E\left[\br_1'\D_1^{-1}\br_1-\frac{1}{n}\tr \D_1^{-1}\bSigma\right]\bigg[\br_1'\D_1^{-1}\bV^{-1}\br_1\\
&-\frac{1}{n}\tr \D_1^{-1}\bV^{-1}\bSigma\bigg]
+\frac{1}{n^2}\Cov(\tr \D_1^{-1}\bSigma, \tr \D_1^{-1}\bV^{-1}\bSigma)\\ 
&=\E\left[\br_1'\D_1^{-1}\br_1-\frac{1}{n}\tr \D_1^{-1}\bSigma\right]\bigg[\br_1'\D_1^{-1}\bV^{-1}\br_1\\
&-\frac{1}{n}\tr \D_1^{-1}\bV^{-1}\bSigma\bigg]
+o\left(\frac{1}{n}\right),\\ 
\E\gamma_1^2\br_1'\D_1^{-1}\bV^{-1}\br_1 
&=\E\gamma_1^2\left[\br_1'\D_1^{-1}\bV^{-1}\br_1-\frac{1}{n}\tr \D_1^{-1}\bV^{-1}\bSigma\right]\\ 
&+\frac{1}{n}\Cov(\gamma_1^2, \tr \D_1^{-1}\bV^{-1}\bSigma)+\frac{1}{n}\E\gamma_1^2E\tr \D_1^{-1}\bV^{-1}\bSigma\\ 
&=\frac{1}{n}\E\gamma_1^2\E\tr \D_1^{-1}\bV^{-1}\bSigma+o\left(\frac{1}{n}\right),\\ 
\E\beta_1\gamma_1^3\br_1'\D_1^{-1}\bV^{-1}\br_1
&=o\left(\frac{1}{n}\right). 
\end{align*} 
We thus arrive at  
\begin{align*} 
S_n 
&=-nb_n^2\E\left[\br_1'\D_1^{-1}\br_1-\frac{1}{n}\tr \D_1^{-1}\bSigma\right]\left[\br_1'\D_1^{-1}\bV^{-1}\br_1-\frac{1}{n}\tr \D_1^{-1}\bV^{-1}\bSigma\right]\nonumber\\ 
&+b_n^3\E\gamma_1^2\E\tr \D_1^{-1}\bV^{-1}\bSigma +\frac{1}{n}b_n^2\E\tr \D_1^{-1}\bV^{-1}\bSigma \D_1^{-1}\bSigma +o(1).\label{spn} 
\end{align*} 
On the other hand, by the identity $\br_j'\D^{-1}=\br_j'\D_j^{-1}\beta_j$, we have 
$$ 
p+z\tr \D^{-1}=\tr(\B_n\D^{-1})=\sum_{j=1}^n\beta_j\br_j'\D_j^{-1}\br_j=n-\sum_{j=1}^n\beta_j, 
$$ 
which implies  $nz\um_n=-\sum_{j=1}^n\beta_j$. From this, together with $\beta_1=b_n-b_n^2\gamma_1+b_n^3\gamma_1^2-\beta_1b_n^3\gamma_1^3$ and \eqref{ineq-ex}, we get 
\begin{eqnarray*}\label{supn} 
	\uS_n=-\frac{n}{z}\E\left(\beta_1- b_n\right) 
	=-\frac{n}{z}b_n^3\E\gamma_1^2+o(1). 
\end{eqnarray*} 
 Applying Lemma \ref{double-e} to the simplified  $S_n$ and $\uS_n$, and then replacing $\D_j$ with $\D$ in the derived results 
yield 
\begin{align*} 
S_n 
&=-\frac{b_n^2}{n}\bigg[\E\tr \D^{-1}\T \D^{-1}\bV^{-1}\T+\frac{2}{p}\bigg( \frac1n\tr\T^2\E\tr\T \D^{-1}\tr\T\D^{-1}\bV^{-1}\\ 
&\quad -\E\tr\T^2\D^{-1}\tr\T\D^{-1}\bV^{-1}-\E\tr\T \D^{-1}\tr \T^2\D^{-1}\bV^{-1}\bigg)\bigg]\nonumber\\ 
&+\frac{2b_n^3}{n^2}\bigg[\E\tr \D^{-1}\T \D^{-1}\T+\frac{1}{p}\bigg( \frac1n\tr\T^2\E\tr\T \D^{-1}\tr\T\D^{-1}-2\E\tr\T^2\D^{-1}\tr\T\D^{-1}\bigg) \bigg] \\ 
&\cdot \E\tr \D^{-1}\bV^{-1}\T-\frac{(\tau-3)b_n^2}{n}\bigg[\E\tr[(\T^{\frac 12}\D^{-1}\T^{\frac 12})\circ (\T^{\frac 12} \D^{-1}\bV^{-1}\T^{\frac 12})]\\
&+\frac{1}{p^2}\E\tr(\D^{-1}\T)\tr(\D^{-1}\bV^{-1}\T)\tr[\T\circ\T]\\
&-\frac{1}{p}\E\tr(\D^{-1}\T)\tr[(\T^{\frac 12}\D^{-1}\bV^{-1}\T^{\frac 12})\circ \T]
-\frac{1}{p}\E\tr(\D^{-1}\bV^{-1}\T)\tr[(\T^{\frac 12}\D^{-1}\T^{\frac 12})\circ \T]\bigg]\\
&+\frac{(\tau-3)b_n^3}{n^2}\bigg[\E\tr[(\T^{\frac 12}\D^{-1}\T^{\frac 12})\circ (\T^{\frac 12} \D^{-1}\T^{\frac 12})]
+\frac{1}{p^2}\E{\tr}^2(\D^{-1}\T)\tr[\T\circ \T]\\
&-\frac{2}{p}\E\tr(\D^{-1}\T)\tr[(\T^{\frac 12}\D^{-1}\T^{\frac 12})\circ \T]\bigg]\E \D^{-1}\bV^{-1}\T+o(1),\\
\uS_n&=\frac{-2b_n^3}{zn}\bigg[\E\tr \D^{-1}\T \D^{-1}\T+\frac{1}{p}\bigg( \frac 1p \tr\T^2\E\tr\T \D^{-1}\tr\T\D^{-1}
-2\E\tr\T^2\D^{-1}\tr\T\D^{-1}\bigg) \bigg]\\
&-\frac{(\tau-3)b_n^3}{zn}\bigg[\E\tr[(\T^{\frac 12}\D^{-1}\T^{\frac 12})\circ (\T^{\frac 12} \D^{-1}\T^{\frac 12})]
+\frac{1}{p^2}\E{\tr}^2(\D^{-1}\T)\tr[\T\circ \T]\\
&-\frac{2}{p}\E\tr(\D^{-1}\T)\tr[(\T^{\frac 12}\D^{-1}\T^{\frac 12})\circ \T]\bigg] +o(1). 
\end{align*}

To study the convergence of $S_n$ and $\uS_n$, 
we need to figure out the difference between $\D^{-1}$ and $\bV^{-1}$. Write 
\begin{eqnarray} 
\D^{-1}+\bV ^{-1} 
=b_n\R_1+\R_2+\R_3,\label{tilde-D-L} 
\end{eqnarray} 
where
\begin{align*} 
	\R_{1}&=\sum_{j=1}^n\bV^{-1}(\br_j\br_j'-n^{-1}\bSigma)\D_{j}^{-1},\quad
	\R_{2}=\sum_{j=1}^n\bV^{-1}\br_j\br_j'\D_{j}^{-1}(\beta_j-b_n),\\
	&\quad \quad \quad \quad \quad \quad \R_{3}=\frac{1}{n}\sum_{j=1}^nb_n\bV^{-1}\bSigma(\D_j^{-1}-\D^{-1}). 
\end{align*}
From \cite{BS04} we have, for any $p\times p$ matrix $\bM$, 
\begin{equation} 
|\E\tr \R_2\bM|\leq n^{1/2} K (\E||\bM||^4)^{1/4}\ \text{and}\ 
|\tr \R_3\bM|\leq K(\E||\bM||^2)^{1/2}\label{tilde-CM}
\end{equation} 
and, for nonrandom matrix $\bM$, 
\begin{align} 
|\E\tr \R_1\bM|\leq n^{1/2}K||\bM||\label{tilde-AM}. 
\end{align} 
Taking a step further, for $\bM$ nonrandom, we write  
\begin{equation} 
\tr \R_1\bSigma \D^{-1}\bM=\tilde R_{11}+\tilde R_{12}+\tilde R_{13}, 
\end{equation}
where 
\begin{align*} 
	\tilde R_{11}&=\tr \sum_{j=1}^n\bV ^{-1}\br_j\br_j'\D_j^{-1}\bSigma(\D^{-1}-\D_j^{-1})\bM,\\ 
	\tilde R_{12}&=\tr \sum_{j=1}^n\bV ^{-1}(\br_j\br_j'-n^{-1}\bSigma)\D_j^{-1}\bSigma\D_j^{-1}\bM,\\ 
	\tilde R_{13}&=-\frac{1}{n}\tr \sum_{j=1}^n\bV ^{-1}\bSigma\D_j^{-1}\bSigma(\D^{-1}-\D_j^{-1})\bM. 
\end{align*} 
It's clear that 
$\E\tilde R_{12}=0$ and moreover, 
using \eqref{D-bound}, \eqref{ineq-ex} and \eqref{tr-dm-bound}, we get 
\begin{align} 
|\E\tilde R_{13}|&\leq  K||\bM||,\\
\E\tilde R_{11}&=-n\E\beta_1\br_1\D_1^{-1}\bSigma \D_1^{-1}\br_1\br_1'\D_1^{-1}\bM\bV ^{-1}\br_1\nonumber\\ 
&=-b_nn^{-1}\E(\tr \D_1^{-1}\bSigma  \D_1^{-1}\T)( \tr \D_1^{-1}\bM\bV ^{-1}\bSigma )+o(1)\nonumber\\ 
&=-b_nn^{-1}\E(\tr \D^{-1}\bSigma  \D^{-1}\bSigma ) (\tr \D^{-1}\bM\bV^{-1}\bSigma )+o(1)\nonumber\\ 
&=-b_nn^{-1}\E(\tr \D^{-1}\bSigma  \D^{-1}\bSigma )\E (\tr \D^{-1}\bM\bV ^{-1}\bSigma )+o(1).\label{er11} 
\end{align}

Applying  \eqref{bnz}, \eqref{tilde-D-L}-\eqref{er11}, and Lemma \ref{st}, one may approximate each components of $S_n$ and $\uS_n$. Specifically, we  have 
\begin{align*} 
&\frac{1}{n}\E\tr \D^{-1}\T^k=-\int\frac{c_nt^kd \tilde H_p(t)}{z(1+\um_{0}t)}+o(1),\\
& \frac{1}{n}\E\tr \D^{-1}\bV^{-1}\T^k =-\int\frac{c_nt^kd \tilde H_p(t)}{z^2(1+\um_{0}t)^2}+o(1),\\
&\frac{1}{n}\E\tr \D^{-1}\T \D^{-1}\T \\
=&-\frac{1}{n}\E\tr \bV^{-1}\T \D^{-1}\T-\frac{b_n^2}{n^2} \E \tr \D^{-1}\T \D^{-1}\T \E\tr \bV^{-1}\T \D^{-1}\T+o(1)\nonumber\\ 
=&-\frac{1}{n}\E\tr \bV^{-1}\T \D^{-1}\T\left[1+\frac{b_n^2}{n} \E\tr \bV^{-1}\T \D^{-1}\T\right]^{-1}+o(1),\\
=&\int\frac{c_nt^2d \tilde H_p(t)}{z^2(1+\um_{0}t)^2}\left[1-\int\frac{c_n\um_0^2t^2d \tilde H_p(t)}{(1+\um_0t)^2}\right]^{-1}+o(1),\\
&\frac{1}{n}\E\tr \D^{-1}\T \D^{-1}\bV^{-1}\T \\
=&-\frac{1}{n}\E\tr \bV^{-1}\T \D^{-1}\bV^{-1}\T\left[1+\frac{b_n^2}{n}\E \tr \D^{-1}\T \D^{-1}\T\right]+o(1)\nonumber\\ 
=&-\frac{1}{n}\E\tr \bV^{-1}\T \D^{-1}\bV^{-1}\T\left[1+\frac{b_n^2}{n} \E\tr \bV^{-1}\T \D^{-1}\T\right]^{-1}+o(1),\\
=&\int\frac{c_nt^2d \tilde H_p(t)}{z^3(1+\um_0t)^3}\bigg[1-\int\frac{c_n\um_0^2t^2d \tilde H_p(t)}{(1+\um_0t)^2}\bigg]^{-1}+o(1).
\end{align*}

Combining the above results, we obtain
\begin{align*} 
&T_n/\uT_n=\int \frac{c_ntd \tilde H_p(t)}{z(1+\um_0 t)^2}+o(1),\\
&S_n-\uS_nT_n/\uT_n\\
=&-\int\frac{c_n\um_0^2t^2d \tilde H_p(t)}{z(1+\um_0t)^3}\bigg[1-\int\frac{c_n\um_0^2t^2d \tilde H_p(t)}{(1+\um_0t)^2}\bigg]^{-1}\\ 
&-\frac{2c_n\um_0^2}{z}\bigg[\int\frac{ (\alpha_2t-t^2)d \tilde H_p(t)}{1+\um_{0}t}\int\frac{td \tilde H_p(t)}{(1+\um_{0}t)^2} 
-\int\frac{td \tilde H_p(t)}{1+\um_{0}t}\int\frac{t^2d \tilde H_p(t)}{(1+\um_{0}t)^2}\bigg]\\
&-c_n(\tau-3)\bigg\{\frac{1}{z\um_0}g_{p,z}'\left(\frac{-1}{\um_0},\frac{-1}{\um_0}\right)
+\zeta_p \int\frac{t\um_0d \tilde H_p(t)}{1+\um_{0}t}\int\frac{t\um_0d \tilde H_p(t)}{z(1+\um_{0}t)^2}\\
&-\left[\int\frac{td \tilde H_p(t)}{z(1+\um_{0}t)}h_p'\left(\frac{-1}{\um_0}\right)
+\int\frac{t\um_0d \tilde H_p(t)}{z(1+\um_{0}t)^2}h_p\left(\frac{-1}{\um_0}\right)\right]
\bigg\}+o(1).
\end{align*}
Therefore, from \eqref{tnt} and
the identities 
\begin{equation*}
\bigg[1-\int \frac{c_ntd \tilde H_p(t)}{z(1+\um_0 t)^2}\bigg]^{-1}=-z\um_0\bigg[1-\int\frac{c_n\um_0^2 t^2d \tilde H_p(t)}{(1+\um_0 t)^2}\bigg]^{-1}=-\frac{z\um_0'}{\um_0}, 
\end{equation*} 
we obtain
\begin{align*} 
M_n^{(2)}(z)=\frac{S_n-\uS_nT_n/\uT_n}{1-T_n/\uT_n}=\mu_1(z)+(\tau-3)\mu_2(z)+o(1).
\end{align*}

The proof is complete.

\appendix \section{Additional lemmas}\label{appen}
In this Appendix, we present some lemmas and their proofs, which will be used in the proof of our main theorems.

\subsection{Lemmas}\label{less}

\begin{lemma}[Lemma 2.7 in \cite{BS98}]\label{bs98}
 For \(\z=(z_{1}, \ldots, z_p)'\) i.i.d.\ standardized entries, \(\C\) \(p\times p\) matrix (complex) we have for any \(k \geq 2\)
$$
\E\left|\z' \C \z-\operatorname{tr} \C\right|^{k} \leq K \left[\left(\E|z_1|^{4} \operatorname{tr} \C\C^{*}\right)^{\frac k2}+\E|z_1|^{2 k} \operatorname{tr}\left(\C \C^{*}\right)^{\frac k2}\right],
$$
where $K$ is a constant depending only on $k$. 
\end{lemma}

\begin{lemma}\label{double-e} 
 Suppose that Assumptions (a)-(c) and \eqref{tcr} hold. Let $\z=(z_1,\ldots,z_p)'$ be a random vector with i.i.d.\ standardized entries,
  $\br=\sqrt{p/n}\A^{1/2}\z/||\A^{1/2}\z||$, and $\bSigma=n\E\br\br'$.
	Then for any $p\times p$ complex matrices $\C$ and $\tilde \C$ with bounded spectral norms, 
	\begin{align*} 
	&n^2\E\left(\br'\C\br-\frac1n\tr\bSigma \C\right)\left(\br'\tilde \C\br-\frac1n\tr\bSigma\tilde \C\right)\\ 
=&\tr\T  \C\T  \tilde \C +\tr\T  \C\T  \tilde \C' 
+\frac{2}{p^2}\tr\T ^2\tr\T  \C\tr\T  \tilde \C
-\frac{2}{p}\tr\T ^2\C\tr\T \tilde \C
-\frac{2}{p}\tr\T  \C\tr \T ^2\tilde \C\\
&+(\tau-3)\bigg\{ \tr[(\T ^{\frac 12}\C\T ^{\frac 12})\circ (\T ^{\frac 12}\tilde \C\T ^{\frac 12})]+\frac{1}{p^2}\tr\C\T \tr\tilde\C\T \tr[\T \circ\T ]\\
&-\frac{1}{p}\tr\C\T \tr[(\T ^{\frac 12}\tilde\C\T ^{\frac 12})\circ \T ]-\frac{1}{p}\tr\tilde \C\T \tr[(\T ^{\frac 12}\C\T ^{\frac 12})\circ \T ]\bigg\}+o(p). 
	\end{align*}  
\end{lemma}

\begin{lemma}\label{st}
 Suppose that Assumptions (a)-(c) and \eqref{tcr} hold with $\gamma=5$. We have
\begin{align*}
||\bSigma-\T||=o(p^{-1}),
\end{align*}
where $\bSigma$ is defined in Lemma \ref{double-e} and $\T$ is given in \eqref{tre}.
\end{lemma}

\begin{lemma}\label{ineq} 
Under the assumptions in Lemma \ref{double-e}, for any $k\geq 2$, 
	\begin{align}\label{q-moment} 
	\E\left|\br'\C\br-\frac1n\tr \bSigma \C\right|^k
	&\leq Kn^{-k}\left[\E|z_1|^{2 k} \operatorname{tr} (\C\bSigma)^{k}+\left(\E|z_1|^{4} \operatorname{tr} (\C\bSigma)^{2}\right)^{\frac k2}+\|\C\bSigma\|^{k}\left(p^{\frac k2} \E^{\frac k2}|z_1|^{4}+p \E|z_1|^{2 k}\right)\right]\nonumber\\
	&\leq K\left(n^{-\frac k2} +n^{-1- \frac{k(\gamma-4)}{\gamma}}\right).
	\end{align} 
	where $K$ is a constant depending only on $k$. 
\end{lemma}

\subsection{Proof of Lemma {\ref{double-e}}}

Denote $\W=\A^{\frac 12}\C \A^{\frac 12}$, $\U=\A^{\frac 12}\tilde \C\A^{\frac 12}$, and $s=\z' \A\z/p$. We consider the product of the quadratic form $n^2\br'\C\br\br'\tilde \C\br=\z'\W\z\z'\U\z/s^2$. 
From Lemma \ref{bs98}, the fact $\tr \A=p$, and the conditions in \eqref{tcr}, it holds that  
\begin{align}\label{sk}
\E|s-1|^k\leq K\left(p^{-\frac k2}+p^{-1-\frac{k(\gamma-4)}{\gamma}}\right),\quad k\geq 2.
\end{align}
By the identity 
$$\frac{1}{s^2}=2-s^2+(1-s^2)^2+s^{-2}(1-s^2)^3$$
and the inequality
$$
\E (\z'\W\z\z'\U\z)(s^{-2}(1-s^2)^3)\leq Kp^2\E |1-s|^3=o(p),
$$
we have
\begin{align}\label{yy}
\E \br'\C\br\br'\tilde \C\br
&=\E (\z'\W\z\z'\U\z)\Big(6-8s+3s^2\Big)+o(p).
\end{align}
Therefore, the main task in the following is to derive the limits for the three terms $\E \z'\W\z\z'\U\z$, $\E \z'\W\z\z'\U\z s$ and $\E \z'\W\z\z'\U\z s^2$ up to the order $O(p)$.

For the first term  $\E \z'\W\z\z'\U\z$, we have 
\begin{align*}
\E \z'\W\z\z'\U\z=\E \sum_{i, j, k, \ell} z_i z_j z_k z_\ell\W_{ij}\U_{k\ell}.
\end{align*}
Since all the $p$ components $z_i$ are independent and  standardized, with mean zero, variance one and finite fourth moment,  the terms that will contribute are the ones with their indexes  either can be glued together   or divided into two groups, i.e. $i=j=k=\ell$, or $i=j\neq k=\ell$, or $i=k\neq j=\ell$ or $i=\ell \neq j=k$. All the four cases together gives 
\begin{align}\label{zwzu}
\E \z'\W\z\z'\U\z=\tr \W\tr\U+\tr \W\U+\tr \W'\U+(\tau-3)\sum_i \W_{ii}\U_{ii}+o(p).
\end{align}
For the second term  $\E \z'\W\z\z'\U\z s$, we have 
\begin{align}\label{rfd}
\E \z'\W\z\z'\U\z s=
\frac 1p\E \sum_{i, j, k, \ell, s, u}  z_i z_j z_k z_\ell z_s z_u\W_{ij}\U_{k\ell} \A_{su}.
\end{align}
The terms that will contribute up to order $O(p)$ are  in $\sum_{(2)}$ and $\sum_{(3)}$, where the index $(\cdot)$ denotes the number of distinct integers in the set $\{i, j, k, \ell, s, u\}$. It can be checked that the following three cases should be counted in $\sum_{(2)}$ (all have the form of the product of two traces)

\noindent  case 1: $i=j\neq k=\ell=s=u$,

\noindent  case 2: $k=\ell\neq i=j=s=u$,

\noindent  case 3: $s=u\neq i=j=k=\ell$, 

\noindent while in $\sum_{(3)}$ the following four cases should be taken into account,

\noindent  case 1: $k=s\neq \ell=u\neq i=j$ and $k=u\neq \ell=s\neq i=j$,

\noindent  case 2: $i=s\neq j=u\neq k=\ell$ and $i=u\neq j=s\neq k=\ell$,

\noindent  case 3: $i=\ell\neq j=k\neq s=u$ and $i=k\neq j=\ell\neq s=u$,

\noindent  case 4: $i=j\neq k=\ell\neq s=u$.

\noindent Combining the contribution of each cases in $\sum_{(2)}$ and $\sum_{(3)}$, we have 
\begin{align*}
\text{case 1}&=\frac{\tau+o(1)}{p}\sum_{i\neq k}\W_{ii}\U_{kk}\A_{kk}+\frac{2}{p}\sum_{i\neq k\neq \ell}\W_{ii}\U_{k\ell}\A_{\ell k}\\
&=\frac{\tau-2}{p}\sum_{i\neq k}\W_{ii}\U_{kk}\A_{kk}+\frac{2}{p}\sum_{i\neq k}\W_{ii}(\U\A)_{k k}+o(p)\\
&=\frac {\tau-2}{p} \tr \W\sum_k \U_{kk}\A_{kk}+\frac 2p \tr \W \tr (\U\A)+o(p),\\
\text{case 2}&=\frac{\tau+o(1)}{p}\sum_{i\neq k}\W_{ii}\U_{kk}\A_{ii}+\frac{2}{p}\sum_{i\neq j\neq k}\W_{ij}\U_{kk}\A_{ji}\\
&=\frac{\tau-2}{p}\sum_{i\neq k}\W_{ii}\A_{ii}\U_{kk}+\frac{2}{p}\sum_{i\neq k}\U_{k k}(\W\A)_{ii}+o(p)\\
&=\frac {\tau-2}{p} \tr \U\sum_i \W_{ii}\A_{ii}+\frac 2p \tr \U \tr (\W\A)+o(p),\\
\text{case 3}&=\frac{\tau+o(1)}{p}\sum_{s\neq i}\W_{ii}\U_{ii}\A_{ss}+\frac{1}{p}\sum_{i\neq j\neq s}\W_{ij}\U_{ji}\A_{ss}+\frac{1}{p}\sum_{i\neq j\neq s}\W_{ij}\U^*_{ji}\A_{ss}\\
&=\frac{\tau-2}{p}\sum_{s\neq i}\W_{ii}\U_{ii}\A_{ss}+\frac{1}{p}\sum_{i\neq s}\A_{ss}(\W\U)_{ii}+\frac{1}{p}\sum_{i\neq s}\A_{ss}(\W\U^*)_{ii}+o(p)\\
&=\frac {\tau-2}{p} \tr \A\sum_i \W_{ii}\U_{ii}+\frac 1p \tr \A \tr (\W\U)+\frac 1p \tr \A \tr (\W\U^*)+o(p),\\
\text{case 4}&=\frac 1p \sum_{i\neq k \neq s}\W_{ii}\U_{kk}\A_{ss}\\
&=\frac 1p \tr \W \tr \U \tr \A-\frac 1p \tr \A\sum_i \W_{ii}\U_{ii}-\frac 1p \tr \U\sum_i \W_{ii}\A_{ii}\\
&\quad-\frac 1p \tr \W\sum_i \A_{ii}\U_{ii}+o(p),
\end{align*}
which further gives
\begin{align}\label{zzzzs}
\E \z'\W\z\z'\U\z s&=\text{case 1}+\text{case 2}+\text{case 3}+\text{case 4}+o(p)\nonumber\\
&=\frac 1p \tr \W \tr \U \tr \A+\frac 2p \tr \W \tr (\U\A)+\frac 2p \tr \U \tr (\W\A)\nonumber\\
&\quad+\frac 1p \tr \A \tr (\W\U)+\frac 1p \tr \A \tr (\W\U^*)+\frac {\tau-3}{p} \tr \W\sum_k \U_{kk}\A_{kk}\nonumber\\
&\quad+\frac {\tau-3}{p} \tr \U\sum_i \W_{ii}\A_{ii}+\frac {\tau-3}{p} \tr \A\sum_i \W_{ii}\U_{ii}+o(p).
\end{align}
Finally, for the third term  $\E \z'\W\z\z'\U\z s^2$, we have 
\begin{align*}
\E \z'\W\z\z'\U\z s^2=
\frac {1}{p^2}\E \sum_{i, j, k, \ell, s, u, m, b}  z_i z_j z_k z_\ell z_s z_uz_m z_b\W_{ij}\U_{k\ell} \A_{su}\A_{mb}.
\end{align*}
The terms that will make the main contribution up to order $O(p)$ are  in $\sum_{(3)}$ and $\sum_{(4)}$. For example, when considering $\sum_{(1)}$, we have 
\begin{align*}
\sum_{(1)}=\E \sum_{i} \frac {1}{p^2} z^8_i \W_{ii}\U_{ii} \A^2_{ii}=O(p^{1-4(\gamma-4)/\gamma})=o(p)
\end{align*}
by using the assumptions in \eqref{tcr}.
Similar technique can be applied for dealing with the terms in  $\sum_{(2)}$ and get the $o(p)$ bound, thus can be neglected. For terms in $\sum_{(3)}$ and $\sum_{(4)}$, we list in the following all the cases that should be counted, which are all up to order $O(p)$. 
For $\sum_{(3)}$, we have six cases

\noindent  case 1: $i=j\neq k=\ell\neq s=u=m=b$,

\noindent  case 2: $i=j=s=u\neq k=\ell\neq m=b$,

\noindent  case 3: $i=j=m=b\neq k=\ell\neq s=u$,

\noindent  case 4: $k=\ell=m=b\neq i=j\neq s=u$,

\noindent  case 5: $i=j=k=\ell\neq s=u\neq m=b$,

\noindent  case 6: $k=\ell=s=u\neq i=j\neq m=b$,

\noindent while in $\sum_{(4)}$, we have seven cases 

\noindent case 1: $i=j\neq k=\ell\neq u=m\neq s=b$ and $i=j\neq k=\ell\neq s=m\neq u=b$,

\noindent  case 2: $i=s\neq j=u\neq k=\ell\neq m=b$ and $i=u\neq j=s\neq k=\ell\neq m=b$,

\noindent  case 3: $i=m\neq j=b\neq u=s\neq k=\ell$ ang $i=b\neq j=m\neq k=\ell\neq s=u$,

\noindent  case 4: $k=m\neq \ell=b\neq i=j\neq s=u$ and $k=b\neq \ell=m\neq i=j\neq s=u$,

\noindent  case 5: $i=k\neq j=\ell\neq s=u\neq m=b$ and $i=\ell\neq j=k\neq s=u\neq m=b$,

\noindent  case 6: $k=s\neq \ell=u\neq i=j\neq m=b$ and $k=u\neq \ell=s\neq i=j\neq m=b$,

\noindent case 7:  $i=j\neq k=\ell\neq s=u\neq m=b$. 

\noindent Combining the above, we have 
\begin{align*}
\text{case 1}&=\frac{2}{p^2}\sum_{i\neq k\neq m\neq s}\W_{ii}\U_{kk}\A_{ms}\A_{ms}+\frac{\tau+o(1)}{p^2}\sum_{i\neq k\neq s}\W_{ii}\U_{kk}\A^2_{ss}\\
&=\frac{2}{p^2}\sum_{i\neq k\neq s}\W_{ii}\U_{kk}(\A\A)_{ss}+\frac{\tau-2}{p^2}\sum_{i\neq k\neq s}\W_{ii}\U_{kk}\A^2_{ss}+o(p)\\
&=\frac{2}{p^2}\tr \A^2 \tr \W \tr \U+\frac{\tau-2}{p^2} \tr \W \tr\U \sum_s \A^2_{ss}+o(p),\\
\text{case 2}&=\frac{2}{p^2}\sum_{i\neq j\neq k\neq m}\W_{ij}\U_{kk}\A_{ij}\A_{mm}+\frac{\tau+o(1)}{p^2}\sum_{i\neq k\neq m}\W_{ii}\U_{kk}\A_{ii}\A_{mm}\\
&=\frac{2}{p^2}\sum_{i\neq k\neq m}(\W\A)_{ii}\U_{kk}\A_{mm}+\frac{\tau-2}{p^2}\sum_{i\neq k\neq m}\W_{ii}\U_{kk}\A_{ii}\A_{mm}+o(p)\\
&=\frac{2}{p^2}\tr (\W\A) \tr \U \tr \A+\frac{\tau-2}{p^2} \tr \U \tr\A \sum_i \W_{ii}\A_{ii}+o(p),\\
\text{case 3}&=\text{case 2},\\
\text{case 4}&=\frac{2}{p^2}\sum_{k\neq \ell \neq i\neq s}\W_{ii}\U_{k\ell}\A_{ss}\A_{k\ell}+\frac{\tau}{p^2}\sum_{k\neq i\neq s}\W_{ii}\U_{kk}\A_{kk}\A_{ss}\\
&=\frac{2}{p^2}\sum_{k\neq i\neq s}(\U\A)_{kk}\W_{ii}\A_{ss}+\frac{\tau-2}{p^2}\sum_{k\neq i\neq s}\W_{ii}\U_{kk}\A_{ss}\A_{kk}+o(p)\\
&=\frac{2}{p^2}\tr (\U\A) \tr \W \tr \A+\frac{\tau-2}{p^2} \tr \W \tr\A \sum_k \U_{kk}\A_{kk}+o(p),\\
\text{case 5}&=\frac{1}{p^2}\sum_{i\neq j \neq s \neq m}\W_{ij}\U_{ij}\A_{ss}\A_{mm}+\frac{1}{p^2}\sum_{i\neq j \neq s \neq m}\W_{ij}\U_{ji}\A_{ss}\A_{mm}\\
&\quad+\frac{\tau}{p^2}\sum_{i\neq s\neq m}\W_{ii}\U_{ii}\A_{ss}\A_{mm}\\
&=\frac{1}{p^2}\sum_{i\neq s\neq m}(\W\U)_{ii}\A_{ss}\A_{mm}+\frac{1}{p^2}\sum_{i\neq s\neq m}(\W\U^*)_{ii}\A_{ss}\A_{mm}\\
&\quad+\frac{\tau-2}{p^2}\sum_{i\neq s\neq m}\W_{ii}\U_{ii}\A_{ss}\A_{mm}+o(p)\\
&=\frac{1}{p^2}\tr (\W\U) (\tr \A)^2+\frac{1}{p^2}\tr (\W\U^*) (\tr \A)^2+\frac{\tau-2}{p^2} (\tr \A)^2\sum_i \W_{ii}\U_{ii}+o(p),\\
\text{case 6}&=\text{case 4},\\
\text{case 7}&=\frac{1}{p^2}\sum_{i\neq k\neq s\neq m}\W_{ii}\U_{kk}\A_{ss}\A_{mm}\\
&=\frac{1}{p^2}\tr\W \tr \U(\tr \A)^2-\frac{1}{p^2}\tr \W\tr \U\sum_s \A^2_{ss}-\frac{2}{p^2}\tr \W \tr \A \sum_s \A_{ss}\U_{ss}\\
&\quad -\frac{1}{p^2}(\tr \A)^2 \sum_i \W_{ii}\U_{ii}-\frac{2}{p^2}\tr \U \tr \A \sum_i \W_{ii}\A_{ii}+o(p),
\end{align*}
which finally leads to 
\begin{align}\label{zzzzss}
&~\quad\E (\z'\W\z\z'\U\z)s^2\nonumber\\
&=\frac{1}{p^2}\tr\W \tr \U(\tr \A)^2+\frac{2}{p^2}\tr \A^2 \tr \W \tr \U+\frac{4}{p^2}\tr (\W\A) \tr \U \tr \A\nonumber\\
&\quad+\frac{4}{p^2}\tr (\U\A) \tr \W \tr \A+\frac{1}{p^2}\tr (\W\U) (\tr \A)^2+\frac{1}{p^2}\tr (\W\U^*) (\tr \A)^2\nonumber\\
&\quad+\frac{\tau-3}{p^2} \tr \W \tr\U \sum_s \A^2_{ss}+\frac{2\tau-6}{p^2} \tr \U \tr\A \sum_i \W_{ii}\A_{ii}\nonumber\\
&\quad+\frac{2\tau-6}{p^2} \tr \W \tr\A \sum_k \U_{kk}\A_{kk}+\frac{\tau-3}{p^2} (\tr \A)^2\sum_i \W_{ii}\U_{ii}+o(p).
\end{align}

Collecting \eqref{yy}, \eqref{zwzu}, \eqref{zzzzs}, and \eqref{zzzzss}, we have 
\begin{align}\label{twoqu}
&\quad~n^2\E \br'\C\br\br'\tilde \C\br\nonumber\\
&=(\tau-3)\sum_i \W_{ii}\U_{ii}+\tr \W \tr U+\tr (\W\U)+\tr (\W'\U)\nonumber\\
&\quad+\frac{6}{p^2}\tr \A^2\tr \W\tr \U-\frac{4}{p}\tr (\W\A)\tr \U-\frac{4}{p}\tr (\U\A)\tr \W\nonumber\\
&\quad+\frac{3(\tau-3)}{p^2}\tr \W \tr \U\sum_s\A^2_{ss}-\frac{2(\tau-3)}{p}\tr \W \sum_k\U_{kk}\A_{kk}\nonumber\\
&\quad-\frac{2(\tau-3)}{p}\tr \U \sum_i\W_{ii}\A_{ii}+o(p).
\end{align}
On the other hand,  using the identity
\begin{align*}
\frac 1s=2-s+(1-s)^2+s^{-1}(1-s)^3
\end{align*}
and the inequality \eqref{sk}, we can derive 
\begin{align}\label{ycy}
n\E \br'\C\br=\E\frac 1s \z'\W\z=\E \z'\W\z\Big( 3-3s+s^2\Big)+o(1).
\end{align}
It is trivial to have 
\begin{align}\label{zz}
\E \z'\W\z=\tr \W
\end{align}
and by applying \eqref{zwzu} and  \eqref{zzzzs} again, 
\begin{align}
\E \z' \W \z s&=\frac{\tau-3}{p}\sum_i \W_{ii}\A_{ii}+\tr \W +\frac 1p \tr(\W\A)+\frac 1p \tr(\W^*\A),\label{zss1}\\
\E \z' \W \z s^2&=\tr \W+\frac{2}{p^2}\tr \W \tr(\A^2)+\frac{4}{p}\tr (\W\A)+\frac{2(\tau-3)}{p}\sum_i\W_{ii}\A_{ii}\label{zss2}\\
&\quad+\frac{\tau-3}{p^2}\tr\W\sum_i \A^2_{ii}+o(1).\nonumber
\end{align}
Collecting \eqref{ycy}-\eqref{zss2} leads to
\begin{align}\label{ex}
n\E \br'\C\br&= \tr \W+\frac{\tau-3}{p^2}\tr \W\sum_i \A^2_{ii}+\frac{2}{p^2}\tr\W\tr \A^2-\frac{\tau-3}{p}\sum_i\W_{ii}\A_{ii}\nonumber\\
&\quad-\frac{2}{p}\tr(\W\A)+o(1).
\end{align}
Therefore, combining \eqref{twoqu}-\eqref{ex},
we have reached 
\begin{align} \label{cc1}
&n^2\E\left(\br'\C\br-\frac 1n\tr\bSigma \C\right)\left(\br'\tilde \C\br-\frac1n\tr\bSigma\tilde \C\right)\\
=&n^2\E\br'\C\br\br'\tilde \C\br-n^2\E\br'\C\br\E\br'\tilde \C\br\nonumber\\ 
=&\tr [(\W'+ \W)\U]+\frac{2}{p^2}\tr \A^2\tr\W\tr \U
-\frac{2}{p}\tr (\W\A)\tr \U\nonumber\\
&-\frac{2}{p}\tr (\U\A)\tr\W
+(\tau-3)\tr(\W\circ\U)+\frac{\tau-3}{p^2}\tr\W\tr \U \tr(\A\circ\A)\nonumber\\
&-\frac{\tau-3}{p}\tr\W \tr(\U\circ\A)-\frac{\tau-3}{p}\tr \U \tr(\W\circ\A)+o(p).\nonumber
\end{align}
The proof is then complete by replacing all the matrix $\A$ with $\T$.

\subsection{Proof of Lemma \ref{st}}

Using  the identity
\begin{align*}
\frac 1s=2-s+(1-s)^2+s^{-1}(1-s)^3
\end{align*}
 we have
\begin{align}\label{ew2}
\bSigma=\E \frac 1s \A^{\frac 12}\z \z'\A^{\frac 12}=\E  \A^{\frac 12}\z \z'\A^{\frac 12}\Big(2-s+(1-s)^2+s^{-1}(1-s)^3\Big),
\end{align}
where $s=\z' \A\z/p$. 
First we show that 
\begin{align}\label{eqw1} 
\left\|\E  \A^{\frac 12}\z \z'\A^{\frac 12}s^{-1}(1-s)^3\right\|=o(p^{-1}).
\end{align}
Define an event $A=\big\{|s-1|>1/2\big\}$ then, by Markov's inequality and \eqref{sk}, we have 
${\rm P}(A)=o(n^{-s})$ for any $s>0$.
Therefore,
\begin{align*}
\Big\|\E  \A^{\frac 12}\z \z'\A^{\frac 12}s^{-1}(1-s)^3\Big\|&\leq K \Big\|\E  \z \z's^{-1}(1-s)^3I(A)\Big\|+K \Big\|\E  \z \z's^{-1}(1-s)^3I(A^c)\Big\|\\
&\leq K \Big\|\E  \z \z' |1-s|^3\Big\|+o(n^{-s}).
\end{align*}
Applying H\"older's inequality and \eqref{sk}, we have
\begin{align*}
\Big\|\E  \z \z' |1-s|^3\Big\|&=\max_{\alpha\in \mathbb R^p,\|\alpha\|=1}\E \alpha' \z \z' \alpha |1-s|^3
\leq \max_{\alpha\in \mathbb R^p,\|\alpha\|=1}\E \big|\z' \alpha \alpha' \z -1\big|\big|1-s\big|^3+\E\big|1-s\big|^3\\
&\leq \max_{\alpha\in \mathbb R^p,\|\alpha\|=1}\E^{\frac 12} \big|\z' \alpha \alpha' \z -1\big|^2\E^{\frac 12}\big|1-s\big|^6+o(p^{-1}),
\end{align*}
which is $o(p^{-1})$ from \eqref{sk} and the fact  $\E |\z' \alpha \alpha' \z -1|^2=O(1)$. Therefore, \eqref{eqw1} is verified, which together with \eqref{ew2} give
\begin{align}\label{com1}
\bSigma=\E  \A^{\frac 12}\z \z'\A^{\frac 12}\Big(2-s+(1-s)^2\Big)+o(p^{-1})=\A^{\frac 12}\left[\E  \z \z'\Big(3-3s+s^2\Big)\right]\A^{\frac 12}+o(p^{-1}),
\end{align}
where (and in the following) the ``$o(p^{-1})$" is in terms of spectral norm.

Next, we deal with  the terms  $\E \z \z's$ and $\E \z \z' s^2$.
For $\E \z \z' s$, we have its $(i,j)$-th entry given by
\begin{align*}
[\E  \z \z' s]_{ (i, j)}&=\frac 1p \E z_i z_j \sum_{k, \ell} z_k z_\ell \A_{k \ell}
=\left\{\begin{array}{ll}
\frac 1p \A_{ij}+\frac 1p \A_{ji} & i\neq j\\
1+\frac 1p\Big(\tau-1+o(1)\Big)\A_{ii} & i=j
\end{array}\right.,
\end{align*}
which gives  
\begin{align*}
\E  \z \z' s=\I_p+\frac 2p \A+\frac 1p \Big(\tau-3+o(1)\Big)\text{diag}(\A)
\end{align*}
and
\begin{align}\label{com2}
\E  \A^{\frac 12}\z \z'\A^{\frac 12} s=\A+\frac 2p \A^2+\frac 1p \Big(\tau-3\Big)\A^{\frac 12} \text{diag}(\A)\A^{\frac 12}+o(p^{-1}).
\end{align}
For the term $\E  \z \z' s^2$, similar to the derivation of \eqref{rfd}, we have its $(i,j)$-th entry is given by 
\begin{align*}
[\E  \z \z' s^2]_{ (i, j)}&=\frac{1}{p^2} \E z_i z_j \sum_{k, \ell, s, u} z_k z_\ell z_s z_u \A_{k \ell} \A_{su}\\
&=\left\{\begin{array}{ll}
\frac 4p \A_{ij}-\frac{4}{p^2}\A_{ii}\A_{ij}-\frac{4}{p^2}\A_{ij}\A_{jj}+o(p^{-2}) & i\neq j\\
\frac{1}{p^2}\big(\tau-3\big)\sum_k \A^2_{kk}+\frac{2}{p}(\tau-1)\A_{ii}+\frac{2}{p^2}\tr \A^2+1+o(p^{-1}) &i=j
\end{array}\right..
\end{align*} 
Therefore, we get
\begin{align*}
\E  \z \z' s^2=\frac 4 p\A+\frac{1}{p^2}(\tau-3)\tr(\A\circ \A) \I_p+\I_p+\frac{2}{p}(\tau-3)\text{diag}(\A)+\frac{2}{p^2}\tr \A^2 \cdot \I_p+o(p^{-1}), 
\end{align*}
which further gives that 
\begin{align}\label{com3}
\E  \A^{\frac 12}\z \z' \A^{\frac 12} s^2=\frac 4 p\A^2+\frac{1}{p^2}(\tau-3)\tr(\A\circ \A) \A+\A+\frac{2}{p}(\tau-3)\A^{\frac 12}\text{diag}(\A)\A^{\frac 12}+\frac{2}{p^2}\tr \A^2 \cdot \A+o(p^{-1}).
\end{align}
Collecting \eqref{com1}, \eqref{com2}, and \eqref{com3}, we obtain 
\begin{align*}
\bSigma=\A
-\frac{\tau-3}{p}\A{\rm diag} (\A)\A'-\frac{2}{p}\A^2
+\left(\frac{\tau-3}{p^2}\tr(\A\circ\A)+\frac{2}{p^2}\tr \A^2\right)\A+o(p^{-1}).
\end{align*}
The proof is thus complete.

\subsection{Proof of Lemma \ref{ineq}}

This lemma can be obtained from similar arguments for the proof of Lemma 6 in \cite{MJ19}. We omit the details.

\section*{Acknowledgement}

Weiming Li's research is partially supported by NSFC (No.\ 11971293 ) and Program of IRTSHUFE. Qinwen Wang acknowledges support from a NSFC    Grant (No.\ 11801085) and the Shanghai Sailing Program (No.\  18YF1401500).
Jianfeng Yao's research is partly supported by a HKSAR RGC Grant (GRF
17306918).  
Wang Zhou's research is partially supported by the MOE Tier 2 grant MOE2015-T2-2-039 (R-155-000-171-112).


\end{document}